\documentclass[a4paper,12pt]{article}
\usepackage{bm}
\usepackage{amsmath}
\usepackage{amsthm}
\usepackage{wasysym}
\usepackage{amssymb}
\usepackage{amsfonts}
\usepackage{graphicx}
\usepackage[english]{babel}
\usepackage[utf8]{inputenc}
\usepackage[T1]{fontenc}
\usepackage{mathrsfs}
\usepackage{enumerate}
\usepackage{version}
\usepackage{calc}
\usepackage{subfigure}
\usepackage{authblk}
\usepackage{comment}
 \usepackage{pstricks,pst-math,pst-xkey}
 \usepackage{float}
 \usepackage{indentfirst}
\usepackage[pagewise]{lineno} 
\newtheorem{definition}{Definition}[section]
\newtheorem{theorem}[definition]{Theorem}
\newtheorem{lemma}[definition]{Lemma}
\newtheorem{corollary}[definition]{Corollary}

\numberwithin{equation}{section}

\newcommand{\N}{{\mathbb N}}

\newcommand{\R}{{\mathbb R}}

\newcommand{\ve}{{\varepsilon}}
\newcommand{\wwtilde}[1]{\widetilde{\widetilde{#1}}}

\def\ve{\varepsilon}

\def\f{\varphi}

\def\d{\partial}

\def\N{\mathbb{N}}

\def\f{\varphi}

\def\R{\mathbb{R}}
\def\E{\mathbb{E}}
\def\N{\mathbb{N}}

\def\Lf{\Lambda_f}

\begin{document}

\title{On the Optimal Control Problem of Stochastic Semilinear Partial Differential Equations with Non-Globally Lipschitz Coefficients}
\date{}

\author[1]{Oleksiy Kapustyan \thanks{kapustyan@knu.edu}}

\author[3] {Olha Martynyuk
\thanks{o.martynyuk@chnu.edu.ua}}

\author[2]{Oleksandr Misiats\thanks{omisiats@vcu.edu}}

\author[1]{Oleksandr Stanzhytskyi \thanks{stanzhytskyi@knu.edu}}

\affil[1]{Department of Mathematics,
Taras Shevchenko National University of Kyiv, Ukraine}

\affil[2]{Department of Mathematics, Virginia Commonwealth University,
Richmond, VA, 23284, USA}

\affil[3]{Department of Mathematics, Yuriy Fedkovych Chernivtsi National University, Chernivtsi, Ukraine}

\maketitle

\begin{abstract}
In this paper, we study optimal control problems for stochastic
semilinear partial differential equations, which lack the maximum principle, and whose coefficients do not have bounded Frechet derivatives. We propose an approximation scheme
for the corresponding optimization problem, and prove convergence
of the approximating solutions on both finite and infinite
time intervals. 

\medskip

\noindent
\textbf{Keywords:}
optimal control,
Wiener process,
weakly convergent minimizers,
lower semicontinuity,
cost function.
\end{abstract}

\medskip

\noindent
\textbf{AMS Subject Classification:}
49K27, 93E20, 60H25, 35J61.

\section{Introduction}

In this paper, we study the optimal control problem for stochastic
semilinear partial differential equations

\begin{equation}\label{1.1}
\begin{cases}
\begin{aligned}
& dy = \bigl(Ay + f(y) + u(t)\bigr)\,dt
      + \sigma(y)\,dW(t), \\
& y(t,x) = 0,
\qquad x\in \partial D,\; t\in(0,T), \\
& y(0,x) = y_0(x,\omega).
\end{aligned}
\end{cases}
\end{equation}
with the cost function
\begin{equation}\label{1.2}
J(u)
=
\E\int_{0}^{T}\int_{D} y^2(t,x)\,dx\,dt
+
\E\int_{0}^{T}\int_{D} u^2(t,x)\,dx\,dt
\;\to\;
\inf .
\end{equation}
on a finite time interval, as well as
\begin{equation}\label{1.3}
J(u)
=
\E\int_{0}^{\infty}\int_{D} e^{-\gamma t} y^2(t,x)\,dx\,dt
+
\E\int_{0}^{\infty}\int_{D} u^2(t,x)\,dx\,dt
\to \inf 
\end{equation}
on the unbounded interval $t \in [0,+\infty)$. 
Here $D\subset \mathbb \R^d$, $d\ge 1$, is a bounded domain with a
sufficiently regular boundary (for example, satisfying the Lyapunov
condition), and $A$ is a symmetric elliptic operator

\[
Ay=A(x)y
=
\sum_{i,j=1}^{d}
\partial_{x_i}
\Bigl(
a_{ij}(x)\partial_{x_j}y
\Bigr)
+
a_0(x)y
=
A_1y+a_0(x)y,
\]
with the coefficients
$a_{ij},a_0\in L^\infty(D)$,
$a_{ij}=a_{ji}$,
satisfying, for some $\Lambda_A>0$, the ellipticity condition
\begin{equation}\label{1.4}
\sum_{i,j=1}^{d}
a_{ij}(x)\eta_i\eta_j
\ge
\Lambda_A |\eta|^2,
\qquad
\forall\,\eta\in\mathbb \R^d,
\ \text{for a.e. } x\in D.
\end{equation}
Let
$W(t)$ be an infinite-dimensional $Q$-Wiener process with values in
$L^2(D)$.
As for the nonlinearities $f,\sigma:\mathbb R\to\mathbb R$, we assume that $\sigma$ is globally Lipschitz,
$f\in C^1(\mathbb R)$,
$f(0)=0$, and
\begin{equation}\label{1.5}
f'(s)\le \lambda_f,
\end{equation}
for some nonnegative constant $\lambda_f \geq 0$.

The stochastic process
$u=u(t,x,\omega)$
is regarded as the control variable.
We assume that
$
u(t,x,\omega)\in \mathbb R,
$
and
\[
\E\int_0^T\int_D u^2(t,x)\,dx\,dt < \infty
\]
for the problem \eqref{1.2}, while
\[
\E\int_0^\infty\int_D u^2(t,x)\,dx\,dt < \infty
\]
for the problem \eqref{1.3}.

The well-posedness questions for problems of the form \eqref{1.1}-\eqref{1.2} and \eqref{1.1}-\eqref{1.3}, namely, the existence of
optimal pairs $(y^*(t,\omega),u^*(t,\omega))$, have been studied in
many works; see, for example, \cite{1,3}, where conditions ensuring
existence were obtained under the assumptions of monotonicity and polynomial growth of the
nonlinearity $f(y)$. In the works \cite{4,5,6} et. al. and related papers, the existence
of optimal controls was established using the theory of backward
stochastic differential equations (BSDEs). The methods for constructing
optimal controls in the feedback form were also developed there. However,
these results require the coefficients to be globally Lipschitz continuous.

In \cite{7,8}, the existence results were obtained for the optimal controls
of dissipative stochastic partial differential equations under the
polynomial growth assumptions on the coefficients. However, in applications, one often encounters situations in which
the reaction term $f$ exhibits faster than polynomial growth.
For example, this occurs in the exponential Frank--Kamenetskii
equation \cite{9}. Therefore, it is important to establish the
existence of optimal controls in such situations as well, assuming
only the condition \eqref{1.5}. This condition is clearly satisfied by the function
$f(s)=e^{-s},$ for which the standard polynomial growth condition fails. The presence of noise in the equation \eqref{1.1} introduces a
certain relation between the functions $f$ and
$\sigma$. More precisely, we require 
\begin{equation}
\bigl|f'(s)\sigma^{2}(s)\bigr|
\le
C\bigl(1+|s|^{2}\bigr),
\label{1.6}
\end{equation}
to hold for some $C>0$. It is clear that if $\sigma(s)$ decreases sufficiently rapidly as
$s\to\infty$, then the function $f(s)=e^{-s}$ satisfies the condition
\eqref{1.6}. Consequently, our existence results for the optimal controls of
equation \eqref{1.1} cover several important classes of problems in
which the reaction term may exceed the polynomial
growth.

Besides proving the existence of an optimal control, an important
problem is the construction of such controls, in particular, by means
of the stochastic maximum principle. For finite-dimensional stochastic systems, the first results in this
direction were obtained by A.~Bensoussan \cite{10}, and were further
developed in numerous subsequent works; see, for example,
\cite{11,12,13,14,15}.
However, in the above-mentioned papers, the application of the
maximum principle requires the coefficients of the equation to possess
bounded derivatives, which significantly restricts its applicability.

In order to study the problems \eqref{1.1}--\eqref{1.2} and \eqref{1.1}--\eqref{1.3},
where $f(y)$ is not globally Lipschitz, we propose the following
approach. Rather than approaching the original problem directly, we consider a family of
auxiliary problems with globally Lipschitz coefficients. To this end, for every $k \geq 1$ define
\[
P_k(s):=\min\{\max(-k,s),k\},
\qquad
f_k(s)=f\bigl(P_k(s)\bigr),
\]
and consider, similarly to \cite{16}, the sequence of ``cut-off'' optimal control problems

\begin{equation}
\left\{
\begin{aligned}
dy_k
&=
\bigl(Ay_k+f_k(y_k)+u\bigr)\,dt
+\sigma(y_k)\,dW(t),
\\[1ex]
J_k(u)
&=
E\int_0^T\int_D y_k^2(t,x)\,dx\,dt
+
E\int_0^T\int_D u^2(t,x)\,dx\,dt
\to \inf .
\end{aligned}
\right.
\label{1.7}
\end{equation}
 Since the coefficients
$f_k'(s)$ and $\sigma'(s)$ are globally Lipschitz and  possess bounded
derivatives (after a suitable smoothing, if necessary),
 the stochastic maximum principle is applicable to
problem \eqref{1.7}. Loosely speaking, the main result of the present paper is the following: if $
(J^*,u^*,y^*)$ and $
(J_k^*,u_k^*,y_k^*)
$
are the solutions of problems
\eqref{1.1}--\eqref{1.2}
and
\eqref{1.7},
respectively, then:

\begin{enumerate}
\item[(a)]
\[
J_k^* \to J^*,
\qquad k\to\infty.
\]

\item[(b)]
along a subsequence, we have the following convergences:
\[
u_k^* \to u^*,
\qquad
y_k^* \to y^*
\quad\text{in }L^2(\Omega_T,H),
\]
where
\[
\Omega_T=\Omega\times[0,T],
\qquad
H=L^2(D).
\]
\end{enumerate}
As for the problem \eqref{1.1}--\eqref{1.3} on the infinite time interval (i.e. infinite horizon), we propose the following approximation scheme. For each natural number $n$, let
\[
(u_n^*,y_n^*,J_n^*)
\]
denote a solution of the optimal control problem on the interval
$[0,n]$. Define
\[
u_{n,\infty}^*(t)
=
\begin{cases}
u_n^*(t), & t\in[0,n],\\[1ex]
0, & t>n.
\end{cases}
\]
Clearly, this control is admissible for the infinite-horizon problem. Loosely speaking, in the present paper we establish the following assertions. If  $(J^*,u^*,y(t,u^*))$ is a solution of the problem \eqref{1.1}--\eqref{1.3}, then
\begin{enumerate}
\item[(a)]
\[
J_n^* \to J^*,
\qquad n\to\infty.
\]
\item[(b)]
Along a subsequence,
\[
J(u_{n_k,\infty}^*)
\to
J^*,
\qquad
n_k\to\infty,
\]
that is, the sequence $u_{n_k,\infty}^*$ is minimizing for the problem
\eqref{1.1}--\eqref{1.3}.
\item[(c)]
\[
u_{n_k,\infty}^*
\rightharpoonup
u^*
\quad\text{weakly in}\quad
L^2\bigl(\Omega\times[0,\infty),H\bigr),
\qquad
n_k\to\infty.
\]
\item[(d)]
\[
y_{n_k}\bigl(t,u_{n_k,\infty}^*\bigr)
\rightharpoonup
y(t,u^*),
\qquad
n_k\to\infty,
\]
weakly in the space, endowed with the norm
\[
\|y\|_\gamma^2:= \E\int_0^\infty
\int_D
y^2(t,x)\,dx\,e^{-\gamma t}\,dt.
\]
\end{enumerate}

This article is structured as follows.
In Section \ref{Sec:2}, we introduce all necessary concepts and formulate the
main results.
Section \ref{Sec:3} is devoted to the proof of several auxiliary lemmas.
In Section \ref{Sec:4}, we prove the main results.

\section{Preliminaries and Main Results}\label{Sec:2}

Throughout this paper, we will work with the following functional spaces
\[
H=L^2(D),
\qquad
V=H_0^1(D),
\qquad
V'=H^{-1}(D).
\]
In this case
\[
V\subset H\subset V'
\]
is called a Gelfand triple. We denote by $\|\cdot\|$ the norm in
$H$, by $(\cdot,\cdot)$ the inner product in $H$, and by
$\langle\cdot,\cdot\rangle$ the duality pairing between $V'$ and $V$.
That is,

\[
\langle z,v\rangle = z(v),
\qquad
z\in V',
\quad
v\in V.
\]
It follows that
\[
\langle z,v\rangle = (z,v)
\]
for all $z\in H$ and $v\in V$. Let
\[
(\Omega,\mathcal{F},(\mathcal{F}_t),P)
\]
be a complete probability space equipped with a normal filtration
$(\mathcal{F}_t)$, $t\in[0,T]$.
We denote
\[
D_T=D\times[0,T].
\]
Let
\[
\lambda_i>0,
\qquad
\sum_{i=1}^{\infty}\lambda_i^2<\infty,
\]
and let $\{e_i\}$ be an orthonormal basis in $H$ such that $e_i\in L^\infty(D)$ and
\[
\sup_i \|e_i\|_{L^\infty(D)}<\infty.
\]
According to \cite{17}, such basis always exists. Next, we introduce the operator $Q\in\mathcal{L}(H)$
such that $Q$ is nonnegative and
\[
Qe_i=\lambda_i^2 e_i,
\qquad
i\ge 1.
\]
Then
\[
\lambda:=\operatorname{Tr}(Q)
=
\sum_{i=1}^{\infty}\lambda_i^2
<
\infty.
\]
We introduce the following $H$-valued stochastic process:
\begin{equation}
\label{2.1}
W(t)
=
\sum_{i=1}^{\infty}
\lambda_i e_i \beta_i(t),
\qquad
t\ge 0,
\end{equation}
which is a $Q$--Wiener process. Here $\beta_i(t)$ are standard real-valued mutually independent
Wiener processes. We also assume that

\begin{enumerate}
\item[(i)]
$W(t)$ is $\mathcal{F}_t$--measurable;

\item[(ii)]
$W(t+h)-W(t)$ is independent of $\mathcal{F}_t$ for all
$h\ge 0$, $t\ge 0$.
\end{enumerate}
Denote
\[
U=Q^{\frac12}(H).
\]
From \cite[Lemma 2.2]{18}, we may conclude that 
$U\subset L^\infty(D)$. Following \cite{18}, we introduce the multiplication
operator
$\Phi:U\to H$ as
\[
\Phi(\psi):=\varphi\psi,
\qquad
\psi\in U,
\]
for every $\varphi\in H$. Since $\psi\in L^\infty(D)$, this operator is
well defined. Hence,
\[
\Phi\circ Q^{\frac12}:H\to H
\]
defines a Hilbert--Schmidt operator. In this paper, $L_2^0 = L^2\!\bigl(Q^{\frac12}H;H\bigr)$
will denote the space of all Hilbert--Schmidt operators endowed with
the norm $\|\cdot\|_{L_2^0}$.
This way
\begin{equation}
\label{2.1}
\begin{aligned}
\|\Phi\circ Q^{\frac12}\|_{L_2^0}^{2}
=
\sum_{i=1}^{\infty}
\bigl\|
\Phi\circ Q^{\frac12}e_i
\bigr\|^{2} =
\sum_{i=1}^{\infty}
\lambda_i^{2}
\int_D
\varphi^{2}(x)e_i^{2}(x)\,dx
\le
\sup_i
\|e_i\|_{L^\infty(D)}^{2}
\,
\|\varphi\|^{2}
\,
\lambda,
\end{aligned}
\end{equation}
hence, $
\Phi:D_T\to\mathcal{L}(U;H)
$ is a predictable process (with respect to $\mathcal{F}_t$)
satisfying
\[
\E\int_0^t
\|\Phi\circ Q^{\frac12}\|_{L_2^0}^{2}
\,ds
<
\infty .
\]
According to \cite{19}, we can define the stochastic integral
\[
\int_0^t \Phi(s)\,dW(s)\in H
\]
via
\[
\int_0^t \Phi(s)\,dW(s):=
\sum_{i=1}^{\infty}
\lambda_i
\int_0^t
\Phi(s, \cdot)e_i(\cdot)\,
d\beta_i(s),
\]
and
\begin{equation}
\label{2.2}
\E\left\|
\int_0^t
\Phi(s)\,dW(s)
\right\|^{2}
\le
\lambda
\sup_i
\|e_i\|_{L^\infty(D)}^{2}
\int_0^t
E\|\Phi(s,\cdot)\|^{2}\,ds .
\end{equation}
We impose the following conditions on $f$ and $\sigma$.

\medskip

\noindent
{\bf (A1)} The function
$f:\mathbb{R}\to\mathbb{R}
$ belongs to $C^1(\mathbb{R})$, satisfies
$f(0)=0,$
and there exists a constant $\lambda_f>0$ such that

\begin{equation}
f'(s)\le \lambda_f,
\qquad
\forall\, s\in\mathbb{R}.
\label{2.3}
\end{equation}

\medskip

\noindent
{\bf (A2)} The function
\[
\sigma:\mathbb{R}\to\mathbb{R}
\]
is globally Lipschitz with Lipschitz constant $L_\sigma>0$.
In particular, the assumption (A2) implies
\[
|\sigma(s)|
\le
L_\sigma |s| + |\sigma(0)|.
\]
\medskip
\noindent
{\bf (A3)} There exists a constant $C>0$ such that
\begin{equation}
|f'(s)\sigma^2(s)|
\le
C\bigl(1+|s|^2\bigr).
\label{2.4}
\end{equation}

\medskip
Let $y_0$ be $\mathcal{F}_0$--measurable and let the random process
$u(t)$ be $\mathcal{F}_t$--measurable.

\begin{definition}\label{Def:2.1}
An $\mathcal{F}_t$--adapted random process

\[
y(t)\in L^2(\Omega_T;V)
\]

is called a weak solution of \eqref{1.1} on $[0,T]$ if for every
$\varphi\in V$ one has

\begin{equation}
\begin{aligned}
(y(t),\varphi)
&=
(y_0,\varphi)
+
\int_0^t
\Bigl(
\langle A_1y(s),\varphi\rangle
+
(a_0y(s),\varphi)
+
(u(s),\varphi)
+
(f(y(s)),\varphi)
\Bigr)\,ds
\\
&\quad
+
\int_0^t
\bigl(
\varphi,\sigma(y(s))\,dW(s)
\bigr),
\qquad
\text{for a.e. } t\in[0,T],
\end{aligned}
\label{2.5}
\end{equation}

with probability $1$.
\end{definition}

The conditions for the existence and uniqueness of a weak
solution of \eqref{1.1} were obtained in \cite{20}. For the sake of completeness, we provide the corresponding theorem.

\begin{theorem}{\cite{20}}\label{Th:2.1}
Assume that conditions {\rm{\bf(A1)--(A3)}} hold,
$
y_0\in L^\infty\!\bigl(\Omega;L^\infty(D)\bigr)
$
is $\mathcal{F}_0$--measurable, and
\[
\E\int_0^T \|u(t)\|^2\,dt < \infty.
\]
Then the equation \eqref{1.1} has a unique weak solution on $[0,T]$,
and the following energy equality holds with probability $1$:
\begin{equation}
\begin{aligned}
\|y(t)\|^2
&=
\|y_0\|^2
+
2\int_0^t
\Bigl(
\langle A_1y(s),y(s)\rangle
+
(a_0y(s),y(s))
+
(u(s),y(s))
\\
&\hspace{2.8cm}
+
(f(y(s)),y(s))
\Bigr)\,ds
+
\int_0^t
\|\sigma(y(s))\|_{L_2^0}^{\,2}\,ds
+
2\int_0^t
\bigl(
\sigma(y(s))\,dW(s),y(s)
\bigr).
\end{aligned}
\label{2.6}
\end{equation}
\end{theorem}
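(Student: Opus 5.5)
We obtain the solution of \eqref{1.1} as a limit of solutions of the truncated equations with $f_k(s)=f(P_k(s))$, which are globally Lipschitz. For each $k$, (A2) and the Lipschitz bound on $f_k$ put the truncated equation within the classical variational (Krylov--Rozovskii) monotone framework on the Gelfand triple $V\subset H\subset V'$. This framework gives a unique weak solution $y_k\in L^2(\Omega_T;V)$ with continuous paths in $H$ and an Itô energy equality of the form \eqref{2.6}, where the noise term is controlled through \eqref{2.1}. Note that $f_k'(s)\le\lambda_f$ still holds and $f_k(0)=0$. Hence $(f_k(y),y)\le\lambda_f\|y\|^2$, and $\langle A_1y,y\rangle\le-\Lambda_A\|\nabla y\|^2$ by \eqref{1.4}. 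Applying Itô to $\|y_k\|^2$, taking expectations and using Gronwall and BDG, we obtain bounds uniform in $k$:
\[
\E\sup_{t\le T}\|y_k(t)\|^2+\E\int_0^T\|y_k\|_V^2\,dt\le C\Bigl(1+\E\|y_0\|^2+\E\int_0^T\|u\|^2dt\Bigr).
\]

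\textbf{Step 1 (uniform bound on the nonlinearity).} Since $f$ need not be polynomially bounded, the $L^2$ bounds alone do not control $f_k(y_k)$. We therefore apply the Itô formula to $\int_D F(y_k)\,dx$, where $F$ is a convex, rapidly growing functional adapted to $f$; a natural first choice is $\Psi(s)=\int_0^s(\lambda_f s'-f(s'))\,ds'$ or a variant of it, so that $\Psi''=\lambda_f-f'\ge0$. The second-order Itô correction contains $f'(y)\sigma^2(y)$, and this is exactly the term assumption (A3) is designed to control by $C(1+|y|^2)$. The resulting estimate is a uniform bound $\E\int_0^T\!\!\int_D |f_k(y_k)|\,dx\,dt\le C$, possibly together with a bound on $\E\int(y\,f_k(y))^-$. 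The boundedness $y_0\in L^\infty$ guarantees that the initial value of this functional is finite. I expect this step to be the main obstacle. The functional must be chosen so that the Itô correction, the dissipative term $\Psi''(y)|\nabla y|^2\ge0$ and the drift contributions $\Psi'(y)f(y)$ all have favourable signs, and so that the functional is finite at $y_0$.

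\textbf{Step 2 (Cauchy property).} For $k,m$ set $z=y_k-y_m$ and apply Itô to $\|z\|^2$. The monotonicity gives
\[
(f_k(y_k)-f_m(y_m),z)\le\lambda_f\|z\|^2+(f_k(y_m)-f_m(y_m),z),
\]
and the remaining term is supported on $\{|y_m|>\min(k,m)\}$. By Step 1 and Chebyshev, this set has vanishing measure, so the remaining term tends to zero after a localization by stopping times $\tau_R=\inf\{t:\|y\|>R\}$. Gronwall then gives $\E\sup\|z\|^2\to0$. Consequently $y_k\to y$ in $L^2(\Omega;C([0,T];H))\cap L^2(\Omega_T;V)$. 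Moreover $f_k(y_k)\to f(y)$ a.e. along a subsequence, and together with the $L^1$ bound and the uniform integrability furnished by Step 1 this gives convergence in $L^1$. Passing to the limit in \eqref{2.5} for $\varphi\in V\cap L^\infty$, and then for all $\varphi\in V$ by density, yields the weak solution. The energy equality \eqref{2.6} follows by passing to the limit in the corresponding equalities for $y_k$. Here we use that $(f_k(y_k),y_k)\to(f(y),y)$ by Fatou applied to the sign-controlled part $f(s)s-\lambda_f s^2\le0$ and by dominated convergence for the rest.

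\textbf{Step 3 (uniqueness).} For two solutions $y^1,y^2$ we apply \eqref{2.6}-type Itô calculus to the difference; this is justified since both satisfy the energy equality and $f(y^i)y^j\in L^1$ after localization. Using $(f(y^1)-f(y^2),y^1-y^2)\le\lambda_f\|y^1-y^2\|^2$ and the Lipschitz continuity of $\sigma$, Gronwall gives $y^1=y^2$.
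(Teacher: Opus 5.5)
Your truncation and your Step~1 functional are essentially the paper's. Indeed $\Psi_k(s)=\int_0^s(\lambda_f r-f_k(r))\,dr=F_k(s)+\frac{\lambda_f}{2}s^2$, and $\Psi_k\ge0$ is exactly the bound $G(y_k)\ge-\frac{\lambda_f}{2}\|y_k\|^2$ used in the proof of Lemma~\ref{lem:3.2}. You depart from the paper in the limit passage. The paper gets tightness in $C([0,T];H)$ via factorization, then uses Theorem~\ref{Th:3.1} and Lions' lemma; you instead aim for a Cauchy estimate on the original probability space, which would give the strong solution directly. As written, however, this does not close, and the cause is the output you assign to Step~1. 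You predict only $\sup_k\E\int_0^T\!\int_D|f_k(y_k)|\,dx\,dt<\infty$, and this fails in three places:
\begin{itemize}
\item The Step~2 remainder $(f_k(y_m)-f_m(y_m),y_k-y_m)$ cannot be made small. Its second factor is unbounded, so an $L^1$ bound on the first factor plus Chebyshev smallness of $\{|y_m|>\min(k,m)\}$ gives nothing. Stopping at $\tau_R$ controls only $\|y\|$, not $|y|$ pointwise, so it does not tame a superpolynomial $f$.
\item An $L^1$ bound provides none of the uniform integrability you invoke later.
\item For $d\ge2$ the term $(f(y),\varphi)$ in \eqref{2.5} is not defined for general $\varphi\in V$ when $f(y)$ is only in $L^1$, so your density step fails as well.
\end{itemize}

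What is needed is an $L^2$ bound, and your functional in fact gives one once computed. You have $(f_k(y_k),\Psi_k'(y_k))=\lambda_f(f_k(y_k),y_k)-\|f_k(y_k)\|^2$; after absorbing the $u$- and $a_0$-terms, $-\frac12\|f_k(y_k)\|^2$ remains. Also $\langle A_1y_k,\Psi_k'(y_k)\rangle\le0$ since $\Psi_k''\ge0$, and the Itô correction is bounded via (A2)--(A3). Hence $\sup_k\E\int_0^T\|f_k(y_k)\|^2dt<\infty$, which is \eqref{3.6}. Even with this, Step~2 needs two ingredients you do not supply:
\begin{itemize}
\item Take $k\le m$. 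On $\{|y_m|>k\}$ one then has $|f_k(y_m)|=|f(\pm k)|\le|f_m(y_m)|+2\lambda_f|y_m|$, because $s\mapsto f(s)-\lambda_f s$ is nonincreasing and vanishes at $0$. For $k>m$, $f_k(y_m)$ is not controlled by any bound you have.
\item By H\"older the remainder is then at most $C\bigl(\E\int_0^{\tau_R}\|(y_k-y_m)\mathbf{1}_{\{|y_m|>k\}}\|^2dt\bigr)^{1/2}$. This tends to $0$ only because $|y_j|^2\mathbf{1}_{\{t\le\tau_R\}}$ is bounded in $L^{1+2/d}(\Omega_T\times D)$. That follows from the Gagliardo--Nirenberg inequality $\|y\|_{L^{2+4/d}}^{2+4/d}\le C\|y\|^{4/d}\|y\|_V^2$ together with $\|y\|\le R$ on $[0,\tau_R]$.
\end{itemize}
Together with $P(\tau_R<T)\le C/R^2$, this gives the Cauchy property in probability in $C([0,T];H)$, which suffices for existence.

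Two smaller points. Fatou gives only an inequality for $\int_0^t(f_k(y_k),y_k)\,ds$, so it cannot produce the equality \eqref{2.6}. Obtain \eqref{2.6} instead from the Krylov--Rozovskii It\^o formula for the limit equation, whose drift lies in $L^2(\Omega_T;V')$ once $f(y)\in L^2(\Omega_T;H)$. The same integrability is also what justifies the It\^o computation for the difference in your uniqueness step.
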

For the solution of problems
\eqref{1.1}--\eqref{1.2}
and
\eqref{1.1}--\eqref{1.3},
we need the following concept of a weak martingale solution.
\begin{definition}
We say that equation \eqref{1.1} has a {\it weak martingale solution} if
there exist a complete probability space,$(\widetilde{\Omega}, \widetilde{\mathcal F}, \widetilde P),$
a filtration
$(\widetilde{\mathcal F}_t),$
a $Q$--Wiener process $\widetilde W(t)$, adapted to $(\widetilde{\mathcal F}_t)$, and an
$\widetilde{\mathcal F}_t$--adapted process
$
\widetilde u(t),
$
such that
\[
\mathcal L(u)
=
\mathcal L(\widetilde u),
\]
exists $
\widetilde y_0\in L^2(\widetilde\Omega;H),
$ adapted to $\widetilde{\mathcal F}_0$ and satisfying
\[
\mathcal L(y_0)
=
\mathcal L(\widetilde y_0),
\]
and exists an
$\widetilde{\mathcal F}_t$--adapted process
\[
\widetilde y
\in
L^2(\widetilde\Omega_T;V),
\]
such that $\widetilde P$--almost surely, for almost every
$t\in[0,T]$, the following weak formulation holds
\begin{equation}
\begin{aligned}
(\widetilde y(t),\varphi)
&=
(\widetilde y_0,\varphi)
\quad
+
\int_0^t
\Bigl(
\langle A_1\widetilde y(s),\varphi\rangle
+
(a_0\widetilde y(s),\varphi)
+
(\widetilde u(s),\varphi)
\\
&\qquad\qquad
+
(f(\widetilde y(s)),\varphi)
\Bigr)\,ds
\quad
+
\int_0^t
\bigl(
\varphi,
\sigma(\widetilde y(s))
\,d\widetilde W(s)
\bigr),
\end{aligned}
\label{2.8}
\end{equation}
for every $\varphi\in V$.
\end{definition}
An $\widetilde{\mathcal F}_t$--adapted process
$
\widetilde u
\in
L^2(\widetilde\Omega_T;H)
$
is called a \emph{weak admissible control}. A weak martingale solution of \eqref{1.1}
corresponding to the weak admissible control
$\widetilde u$
will be denoted by $
\widetilde y(t,\widetilde u).$ The set of all weak admissible controls is denoted by $U_{ad}$.

\begin{definition}\label{Def:2.3}
We say that the pair

\[
\bigl(
\widetilde u^{\,*}(t),
\widetilde y(t,\widetilde u^{\,*})
\bigr)
\]
is a weak martingale solution of the optimal control problem
\eqref{1.1}--\eqref{1.2} if
$\widetilde u^{\,*}$ is a weak admissible control and
$\widetilde y(t,\widetilde u^{\,*})$
is the solution of \eqref{1.1} in the sense of Definition~2.2
corresponding to $\widetilde u^{\,*}$, and

\begin{equation}
J(\widetilde u^{\,*})
\le
J(\widetilde u)
\label{2.9}
\end{equation}
for every weak admissible control $\widetilde u$.
\end{definition}
For the problem \eqref{1.1}--\eqref{1.3}, the notion of solution is
introduced analogously.

We are now in the position to state the main theorems of this paper. Let us first consider the optimal control problem on a finite time interval.

\begin{theorem}\label{Th:2.2}
Assume the conditions \eqref{1.4} and {{\bf \rm(A1)--(A3)}}
hold, and
$y_0\in L^\infty\!\bigl(\Omega;L^\infty(D)\bigr).
$
Then the problems \eqref{1.1}--\eqref{1.2} and \eqref{1.7}
possess weak martingale solution

\[
\bigl(\widetilde u^{\,*},
\widetilde y(\cdot,\widetilde u^{\,*})\bigr) \ \text{ and  } \bigl(\widetilde u_k^{\,*},
\widetilde y_k(\cdot,\widetilde u_k^{\,*})\bigr),
\]
respectively.
\end{theorem}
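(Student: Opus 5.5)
We prove Theorem~\ref{Th:2.2} by the direct method of the calculus of variations, combined with Skorokhod's representation theorem. The argument is the same for \eqref{1.1}--\eqref{1.2} and for \eqref{1.7}. For \eqref{1.7}, note that $f_k$ satisfies (A1) and (A3) with the same constants: $f_k'(s)=f'(P_k(s))\mathbf 1_{|s|<k}\le\lambda_f$, and $f_k$ is Lipschitz. So we describe the original problem only.

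\textbf{Minimizing sequence and a priori bounds.} Set $J^*=\inf_{U_{ad}}J\ge 0$. It is finite because $u=0$ is admissible and Theorem~\ref{Th:2.1} provides the corresponding solution. Take a minimizing sequence $u_n$ with solutions $y_n=y(\cdot,u_n)$, and assume $J(u_n)\le J^*+1$, so that $\E\int_0^T\|u_n\|^2dt\le C$. We estimate the terms of the energy equality \eqref{2.6} as follows:
\[
\langle A_1y,y\rangle\le-\Lambda_A\|\nabla y\|^2,\qquad (f(y),y)\le\lambda_f\|y\|^2,
\]
the second because $f(0)=0$ gives $f(s)s=f'(\xi)s^2\le\lambda_f s^2$. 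We also use $\|\sigma(y)\|_{L_2^0}^2\le C(1+\|y\|^2)$ from \eqref{2.1} and (A2). Taking expectations and applying Gronwall, and then BDG for the supremum, gives
\[
\E\sup_{t\le T}\|y_n(t)\|^2+\E\int_0^T\|y_n\|_V^2dt\le C.
\]
Next we need a time-regularity bound. The troublesome term is $f(y_n)$, for which we have no growth bound. We split $f(s)=(f(s)-\lambda_f s)+\lambda_f s$, where $g(s):=\lambda_f s-f(s)$ is nondecreasing with $g(0)=0$. We then seek an $L^1(\Omega\times D_T)$ bound on $g(y_n)$, which needs an estimate of $\E\int|g(y_n)|$. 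To get it, apply It\^o's formula to $\int_D G(y_n)\,dx$ with $G'=g$, or more simply to a truncated version $\int_D \Psi_M(y_n)$. The second-order term is $\tfrac12 \int_D G''(y)\sigma^2(y)\sum\lambda_i^2e_i^2$, which is controlled precisely by (A3), since $|g'\sigma^2|\le \lambda_f\sigma^2+|f'\sigma^2|\le C(1+|s|^2)$. The drift contributes $-\int g(y)(Ay)$, which has a favorable sign (monotone $g$, Dirichlet boundary conditions, and $-\int g'(y)a_{ij}\partial_iy\partial_jy\le0$), together with $-\int g(y)^2$, which is good, plus cross terms with $u,y$ that are absorbed by Young. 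This should give $\E\int_0^T\|g(y_n)\|^2dt\le C$; the $\mathcal F_0$ term is finite because $y_0\in L^\infty$, which is where that hypothesis is used. Consequently $f(y_n)$ is bounded in $L^2(\Omega_T;H)$. From the equation we then obtain a fractional-in-time bound $\E\|y_n\|_{W^{\alpha,2}(0,T;V')}^2\le C$ for $\alpha<1/2$ (deterministic part in $W^{1,2}(0,T;V')$, stochastic part via BDG).

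\textbf{Tightness, Skorokhod, passage to the limit.} By the Aubin--Lions--Simon compactness lemma, $L^2(0,T;V)\cap W^{\alpha,2}(0,T;V')$ embeds compactly in $L^2(0,T;H)$. Hence the laws of $y_n$ are tight on $L^2(0,T;H)$. The laws of $u_n$ are tight on $L^2(0,T;H)$ equipped with the weak topology, the laws of $W$ are tight on $C([0,T];U_1)$ for a suitable Hilbert space $U_1\supset U$, and the law of $y_0$ is fixed. Applying Skorokhod's theorem (Jakubowski's version for the weak topology) yields a new probability space carrying $(\widetilde y_n,\widetilde u_n,\widetilde W_n,\widetilde y_{0,n})$ with the same laws, converging a.s. to $(\widetilde y,\widetilde u,\widetilde W,\widetilde y_0)$. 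The convergence is strong for $\widetilde y_n$ and weak for $\widetilde u_n$. We then pass to the limit in the weak formulation \eqref{2.8}:
\begin{itemize}
\item The linear terms converge directly.
\item For $f(\widetilde y_n)$, we have a.e. convergence along a subsequence together with uniform $L^2$ integrability, so Vitali's theorem gives $f(\widetilde y_n)\to f(\widetilde y)$ in $L^1$.
\item The stochastic integrals converge by the standard lemma on convergence of stochastic integrals with converging Wiener processes (Debussche--Glatt-Holtz--Temam type), using $\sigma(\widetilde y_n)\to\sigma(\widetilde y)$ in $L^2$.
\end{itemize}
Adaptedness is obtained by defining $\widetilde{\mathcal F}_t$ as the filtration generated by the limit processes. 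Finally, $J$ depends only on laws and is weakly lower semicontinuous (Fatou for $\widetilde y$, weak lsc of the norm for $\widetilde u$). Hence $J(\widetilde u)\le\liminf J(u_n)=J^*$, so the limit pair is a weak martingale solution of the optimal control problem.

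\textbf{Main obstacle.} The main obstacle is the second-moment bound for $f(y_n)$ (equivalently $g(y_n)$). It is the only place where (A3) enters, and it needs careful justification of It\^o's formula for $\int_D G(y)$ with $G$ possibly of exponential growth. I would first carry it out for the Lipschitz truncations $f_k$, where It\^o's formula is standard, obtain bounds uniform in $k$, and then transfer the bound by Fatou. A secondary subtlety is that the infimum over weak admissible controls must be interpreted in law, so that the Skorokhod limit is admissible.
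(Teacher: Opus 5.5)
Your proposal is correct and follows essentially the same route as the paper: a minimizing sequence with energy bounds, an $L^2(\Omega_T;H)$ bound on the nonlinearity from It\^o's formula applied to $\int_D$ of a primitive of $f$ (controlled by (A3) and by $y_0\in L^\infty$), tightness plus Jakubowski's version of Skorokhod's theorem, identification of the limit, and weak lower semicontinuity of $J$. The differences are only technical: the paper (through Lemma~3.2) gets tightness in $C([0,T];H)$ by the factorization method and uses the cut-off problems with a two-stage limit ($k\to\infty$, then $n\to\infty$), whereas you use Aubin--Lions--Simon compactness in $L^2(0,T;H)$ with a single Skorokhod step, and your potential with $G'=\lambda_f s-f(s)$ gives $G\ge 0$ and a sign-definite elliptic term, which is slightly cleaner than the paper's $F_k=-\int_0^s f_k$.
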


Denote

\[
J^{*}=J(\widetilde u^{\,*}),
\qquad
J_k^{*}=J(\widetilde u_k^{\,*}).
\]

Since problem \eqref{1.7} satisfies the assumptions of the stochastic
maximum principle (as the function $f_k$ is smooth with bounded derivative), one can proceed with constructing the solution of \eqref{1.7}. The following
theorem provides a method for constructing solutions of problem
\eqref{1.1}--\eqref{1.2} using the solutions of \eqref{1.7} as an approximating sequence:

\begin{theorem}\label{Th:2.3}
Under the assumptions of Theorem \eqref{Th:2.2}, the following
assertions hold:

\begin{enumerate}
\item
\[
J_k^{*}\to J^{*},
\qquad
k\to\infty.
\]

\item
The family $\{\widetilde u_k^{\,*}\}$
is strongly compact in $
L^2(\widetilde\Omega;H)
$,
and every weak subsequential limit of its elements
is a weak martingale optimal control for problem
\eqref{1.1}--\eqref{1.2}.

\item There is a subsequence $n_k, k \geq 1$ such that 
\[
\widetilde  u_{n_k}^{\,*} \rightharpoonup \widetilde  u \text{ in } L^2(\widetilde\Omega_T;H)
\]
and
\[
\widetilde y_{n_k}
\bigl(\cdot,\widetilde u_{n_k}^{\,*}\bigr)
\to
\widetilde y(\cdot,\widetilde u)
\]
strongly in
$
L^2(\widetilde\Omega_T;H),
$ where 
$
\widetilde y(\cdot,\widetilde u)
$
is an optimal trajectory of problem
\eqref{1.1}--\eqref{1.2}.
\end{enumerate}
\end{theorem}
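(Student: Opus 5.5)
The plan is a $\Gamma$-convergence type argument built on uniform a priori estimates and a Skorokhod compactness step. The first step is to derive bounds uniform in $k$ for solutions of the cut-off equation with controls bounded in $L^2(\Omega_T;H)$. We apply Itô's formula to $\|y_k\|^2$, as in the energy equality \eqref{2.6}. Since $f_k(0)=0$ and $f_k'\le\lambda_f$ with the same constant, we have $f_k(s)s\le\lambda_f s^2$. Combining this with \eqref{1.4}, (A2), \eqref{2.2} and the Burkholder--Davis--Gundy inequality gives
\[
\E\sup_{t\le T}\|y_k(t)\|^2+\E\int_0^T\|y_k\|_V^2\,dt\le C\Bigl(1+\E\|y_0\|^2+\E\int_0^T\|u\|^2dt\Bigr),
\]
with $C$ independent of $k$. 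To control the nonlinearity we also need a uniform bound on $f_k(y_k)$ in some $L^1$ sense. For this we apply Itô's formula to $\int_D F_k(y_k)\,dx$ with $F_k'=f_k$, or test the equation with $f_k(y_k)$. The Itô correction term is $f_k'(y_k)\sigma^2(y_k)$, and (A3) bounds it by $C(1+|y_k|^2)$ uniformly in $k$. Using $f_k'\le\lambda_f$, this yields $\E\int_0^T\int_D |f_k(y_k)|\,|y_k|$ and $\E\int_0^T\|f_k(y_k)\|^2$ type bounds, or at least equi-integrability of $f_k(y_k)$.

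The second step is compactness and identification of limits. Take controls $u_k$ bounded in $L^2$ and the corresponding solutions $y_k$ (of \eqref{1.7}, or of \eqref{1.1}). We bound the time increments of $y_k$ in $V'$ via the equation, which gives tightness of the laws of $(y_k,u_k,W,y_0)$ in $L^2(0,T;H)\times L^2_w(0,T;H)\times C([0,T];U')\times H$; here we use the compact embedding $V\subset H$ and a Aubin--Lions/Flandoli--Gatarek criterion. By Skorokhod (Jakubowski's version, to handle the weak topology) we pass to a new probability space where $\widetilde y_k\to\widetilde y$ a.s. in $L^2(0,T;H)$ and $\widetilde u_k\rightharpoonup\widetilde u$. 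Since $y_k\to y$ a.e. and $P_k(s)\to s$, we get $f_k(\widetilde y_k)\to f(\widetilde y)$ a.e. Vitali's theorem, together with the equi-integrability from step one, then passes the limit in the drift. The stochastic integral is handled by the standard martingale identification argument (convergence of quadratic variations). Hence $\widetilde y=\widetilde y(\cdot,\widetilde u)$ is a weak martingale solution. The same argument applied to minimizing sequences proves Theorem~\ref{Th:2.2}, since the cost is weakly lower semicontinuous.

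The third step is to compare values. For the upper bound, fix the optimal $\widetilde u^*$ of \eqref{1.1}--\eqref{1.2}. Solving \eqref{1.7} with that control on the same space, step two together with pathwise uniqueness (Theorem~\ref{Th:2.1}) gives $y_k(\cdot,\widetilde u^*)\to\widetilde y(\cdot,\widetilde u^*)$ strongly in $L^2$; one can also estimate $\|y_k-y\|$ directly using one-sided Lipschitzness. Hence $\limsup J_k^*\le\lim J_k(\widetilde u^*)=J^*$. For the lower bound, $J_k^*$ is bounded, so the $\widetilde u_k^*$ are bounded. Step two gives, along a subsequence, $\widetilde u_k^*\rightharpoonup\widetilde u$ and $\widetilde y_k\to\widetilde y(\cdot,\widetilde u)$ strongly. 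Lower semicontinuity then yields
\[
J^*\le J(\widetilde u)\le\liminf J_k^*.
\]
This proves $J_k^*\to J^*$, the optimality of $\widetilde u$, and item 3. For item 2, the chain of inequalities forces $\E\int\|\widetilde u_k^*\|^2\to\E\int\|\widetilde u\|^2$, because the $y$-parts converge strongly. Weak convergence plus convergence of norms gives strong convergence of the controls, which is the claimed strong compactness.

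The main obstacle is the uniform-in-$k$ control of $f_k(y_k)$ without any growth assumption on $f$. Under (A3), the Itô correction for $F_k(y_k)$ is bounded only in terms of $f'$, not $f_k'$; on $|s|>k$ we have $f_k'=0$, but $P_k$ is not $C^1$, so a smoothing of $P_k$ is needed. I would first try testing with $f_k(y_k)$ (or its smoothed version) and using the sign structure $f_k(s)s\le\lambda_f s^2$ together with $y_0\in L^\infty$ to obtain an $L^1$ bound on $f_k(y_k)y_k$ beyond the linear part. This bound suffices for equi-integrability via the de la Vallée-Poussin criterion.
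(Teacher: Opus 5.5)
Your proposal is correct, but it is organized differently from the paper's proof.

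The paper works controls-first. It bounds $u_k^*$ via $J_k(u_k^*)\le J_k(0)$ and Lemma~\ref{lem:3.3}, takes a weak limit $u_0$, and uses only weak convergence of the states (Lemma~\ref{lem:3.4}). It then compares with $J_k(u_0)$ to force $\E\int_0^T\|u_k^*\|^2dt\to\E\int_0^T\|u_0\|^2dt$, which gives strong convergence of the controls. Strong convergence of the states is transferred from the controls by the uniform-in-$k$ stability estimate $\E\|y_k(t,u_k^*)-y_k(t,u_0)\|^2\le C_{19}\int_0^T\E\|u_k^*-u_0\|^2dt$ (as in \eqref{3.32}) together with Lemma~\ref{lem:3.3}. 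Optimality of $u_0$ and $J_k^*\to J^*$ come from comparing with $u^*$.

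You compare only with $\widetilde u^*$ (a recovery sequence) and use the a.s.\ strong convergence of the Skorokhod copies for the lower bound. The sandwich $J^*\le J(\widetilde u)\le\liminf J_k^*\le\limsup J_k^*\le J^*$ then gives everything at once. Lower semicontinuity of each part of the cost separately forces convergence of both the control norms and the state norms, so your phrase ``because the $y$-parts converge strongly'' is not needed.

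What your route buys:
\begin{itemize}
\item All objects live on one stochastic basis. This avoids the paper's tacit assumption that $u_k^*$ and $u_0$ are driven by the same noise when the stability estimate is applied.
\item Your direct Gronwall bound on $y_k(\cdot,\widetilde u^*)-y(\cdot,\widetilde u^*)$ is a much lighter substitute for Lemma~\ref{lem:3.3}, which needs truncated controls, $p$-th moments and Vitali. It uses $(f_k(a)-f_k(b))(a-b)\le\lambda_f(a-b)^2$ and $\E\int_0^T\|f_k(y)-f(y)\|^2dt\to0$ by dominated convergence, with the domination $|f(P_k(s))|\le|f(s)|+\lambda_f|s|$ and $f(y)\in L^2$ from your step one.
\end{itemize}

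Two points to tighten:
\begin{itemize}
\item Rely on the direct Gronwall variant. The ``compactness plus pathwise uniqueness'' alternative gives only convergence in probability. Upgrading that to $L^2$ would need moments above two, which a general $L^2$ control does not supply without a truncation like $\widetilde u_M$.
\item The convergence of the states in $L^2(\widetilde\Omega_T;H)$ required in item~3 does not follow from step two alone, since that gives a.s.\ convergence with only second moments bounded. It follows from a.s.\ convergence combined with $\widetilde{\E}\int_0^T\|\widetilde y_k\|^2dt\to\widetilde{\E}\int_0^T\|\widetilde y\|^2dt$, which the sandwich provides.
\end{itemize}

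In exchange, the paper's route obtains the strong convergence of the states from an explicit quantitative estimate rather than from compactness.
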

The next two results concern the optimal control problem on unbounded time intervals (infinite horizon). Denote
\begin{equation}
\mu
:=
-\frac{\Lambda_A}{C_P}
+\Lambda_f
+\lambda
\sup_i \|e_i\|_{L^\infty(D)}^2
L_\sigma^2
+\|a_0\|_{L^\infty(D)} ,
\label{astast}
\end{equation}
where $C_P$ is the Poincar\'e constant for $D$.
Consider 
\eqref{1.3} with 
\begin{equation}\label{2.10}
    \gamma > \mu
\end{equation}
and set
\[
\Omega_\infty :=\Omega\times[0,\infty).
\]

\begin{definition}
An {\it admissible control  for problem
 \eqref{1.1}--\eqref{1.3}}
is any $\mathcal F_t$--adapted random process $u\in L^2(\Omega_\infty;H)$
such that $J(u)<\infty$.
\end{definition}

By Theorem \ref{Th:2.1}, for every admissible control $u$, the
equation \eqref{1.1} has a unique weak solution on
$t\ge0$.

\begin{theorem}\label{Th:2.4}
Assume that conditions \eqref{1.4},
{{\bf \rm(A1)--(A3)}}, and \eqref{2.10} hold.
Then the optimal control problem
\eqref{1.1}--\eqref{1.3}
admits a weak martingale solution

\[
\bigl(
\widetilde u^{\,*}(t),
\widetilde y^{\,*}(t,\widetilde u^{\,*})
\bigr).
\]
\end{theorem}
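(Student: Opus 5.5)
We prove Theorem~\ref{Th:2.4} by the direct method of the calculus of variations, combined with a Skorokhod-type compactness argument on a weighted infinite-horizon space; the argument parallels Theorem~\ref{Th:2.2}.

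\emph{Step 1: the infimum is finite.} The control $u\equiv 0$ is admissible. Apply It\^o's formula to $e^{-\gamma t}\|y(t)\|^2$, using the energy equality \eqref{2.6}. The terms are controlled as follows:
\begin{itemize}
\item ellipticity \eqref{1.4} and Poincar\'e give $\langle A_1y,y\rangle\le -\frac{\Lambda_A}{C_P}\|y\|^2$, while also retaining a $-\Lambda_A\|\nabla y\|^2$ contribution;
\item \eqref{2.3} together with $f(0)=0$ gives $(f(y),y)\le\lambda_f\|y\|^2$;
\item \eqref{2.1} gives $\|\sigma(y)\|_{L_2^0}^2\le \lambda\sup_i\|e_i\|^2_{L^\infty}(L_\sigma\|y\|+c)^2$.
\end{itemize}
Taking expectations and using \eqref{2.10} (absorbing the cross terms with $\ve$ small, $\gamma>\mu$), we get the a priori bound
\[
\sup_{t\ge0}\E e^{-\gamma' t}\|y(t)\|^2+\E\int_0^\infty e^{-\gamma t}\bigl(\|y\|^2+\|\nabla y\|^2\bigr)\,dt\le C\Bigl(\E\|y_0\|^2+\E\int_0^\infty\|u\|^2dt+1\Bigr),
\]
where $\mu<\gamma'<\gamma$. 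Hence $J(0)<\infty$, and for any admissible control $J(u)$ is finite exactly when $\|u\|_{L^2(\Omega_\infty;H)}<\infty$.

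\emph{Step 2: compactness of a minimizing sequence.} Let $u_n$ be a minimizing sequence, with trajectories $y_n$. Then $u_n$ is bounded in $L^2(\Omega_\infty;H)$, and by Step~1 $y_n$ is bounded in the weighted space $L^2(\Omega\times[0,\infty),e^{-\gamma t}dt;V)$. On each interval $[0,N]$ we reproduce the tightness estimates used for Theorem~\ref{Th:2.2}:
\begin{itemize}
\item a bound on $y_n$ in $L^2(\Omega_N;V)$;
\item uniform bounds on fractional time increments of $y_n$ in $V'$ (Aldous or Flandoli--G\c{a}tarek type). The drift $f(y_n)$ is controlled in $L^1$ by testing with truncations, and \eqref{2.4} is what makes It\^o's formula applicable to functionals such as $\int_D F(y)$ with $F'=f$.
\end{itemize}
These give tightness of the laws of $(y_n,u_n,W,y_0)$ in $L^2(0,N;H)\times (L^2(0,N;H),\mathrm{weak})\times C([0,N];H)\times H$ for every $N$. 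A diagonal argument over $N\in\N$ and the Skorokhod (Jakubowski) theorem then produce a new probability space $(\widetilde\Omega,\widetilde{\mathcal F},\widetilde P)$ and processes $\widetilde y_n\to\widetilde y$ strongly in $L^2_{loc}([0,\infty);H)$ a.s., $\widetilde u_n\rightharpoonup\widetilde u$ weakly, and $\widetilde W_n\to\widetilde W$, with the same laws as the originals.

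\emph{Step 3: identification of the limit (main obstacle).} We pass to the limit in \eqref{2.8} on each $[0,N]$. The linear terms pass by weak convergence and the stochastic integral by the standard convergence lemma for stochastic integrals, using Lipschitz $\sigma$. The real difficulty is $f(\widetilde y_n)$, since $f$ may grow exponentially and has no polynomial bound. The plan is:
\begin{itemize}
\item a.s.\ a.e.\ convergence $\widetilde y_n\to\widetilde y$ gives $f(\widetilde y_n)\to f(\widetilde y)$ pointwise;
\item to obtain convergence in $L^1(\widetilde\Omega\times[0,N]\times D)$ via Vitali, we need uniform integrability of $f(\widetilde y_n)$. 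We first try the estimate $\E\int_0^N\!\int_D |f(y_n)|\,|y_n|\le C$, which follows from the energy equality by writing $f(s)s=(f(s)-\lambda_f s)s+\lambda_f s^2$, where the first part is $\le0$, and moving it to the other side.
\end{itemize}
This gives an $L\log L$-like integrability that rules out concentration where $|y_n|$ is large. On the set $\{|y_n|\le M\}$, boundedness of $f$ on compacts handles the rest, which suffices for testing against $\varphi\in V\cap L^\infty$ and then, by density, for general $\varphi$. Since the laws coincide, $\mathcal L(\widetilde u)$ is admissible and $\widetilde y=\widetilde y(\cdot,\widetilde u)$ solves \eqref{2.8} on every $[0,N]$, hence on $[0,\infty)$.

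\emph{Step 4: optimality.} By weak lower semicontinuity of norms and Fatou's lemma applied to the weighted strong convergence,
\[
J(\widetilde u)\le\liminf_n J(\widetilde u_n)=\liminf_n J(u_n)=\inf J.
\]
Here $J(\widetilde u_n)=J(u_n)$ holds by equality of laws together with pathwise uniqueness from Theorem~\ref{Th:2.1}. Hence $(\widetilde u,\widetilde y)$ is a weak martingale solution of \eqref{1.1}--\eqref{1.3}.
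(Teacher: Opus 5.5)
Your architecture (weighted a priori bound, tightness on every $[0,N]$, a single Jakubowski--Skorokhod extraction via a diagonal argument, identification of the limit, weak lower semicontinuity) is sound. It differs from the paper, which invokes Lemma~\ref{lem:3.2} separately on each $[0,N]$ and then upgrades to weak convergence in $L^2_\gamma(\widetilde\Omega_\infty;H)$ by cutting off the tail $[N,\infty)$ with the uniform bound \eqref{3.35}. Your diagonal extraction is in fact cleaner, since in the paper the new probability space and processes a priori depend on $N$.

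However, Step~3 has a genuine gap when $d\ge 2$. The energy-identity bound $\mathbb{E}\int_0^N\!\int_D |f(y_n)|\,|y_n|\,dx\,dt\le C$ only gives uniform integrability of $f(\widetilde y_n)$ in $L^1(\widetilde\Omega\times[0,N]\times D)$, hence only $f(\widetilde y)\in L^1$. That lets you pass to the limit against $\varphi\in V\cap L^\infty(D)$. The extension ``by density'' to all $\varphi\in V$, as required in \eqref{2.8}, does not follow: for $d\ge 2$ one has $H_0^1(D)\not\subset L^\infty(D)$, so $(f(\widetilde y(s)),\varphi)$ need not even be defined. Approximating $\varphi$ in $V$ by bounded $\varphi_m$ gives no control of $\int_0^t (f(\widetilde y),\varphi-\varphi_m)\,ds$ without a bound on $f(\widetilde y)$ in a space dual to $V$. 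The same defect affects Step~2: time-increment bounds in $V'$ need the drift $f(y_n)$ controlled in $V'$, and $L^1(D)\not\subset V'$ for $d\ge 2$. (For $d=1$ your argument is fine, since $V\subset L^\infty$.)

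The missing ingredient is a uniform $L^2$ bound on the nonlinearity. This is exactly where (A3) and $y_0\in L^\infty(\Omega;L^\infty(D))$ are used in the paper, in estimate \eqref{3.6} of Lemma~\ref{lem:3.2}. Apply It\^o's formula to $\int_D F_k(y)\,dx$ with $F_k'=-f_k$ for the cut-off equation, where it is legitimate. Then:
\begin{itemize}
\item the term $-\int_0^t\|f_k(y)\|^2\,ds$ appears with a favorable sign;
\item $-\langle A_1y,f_k(y)\rangle=\int_D f_k'(y)\sum_{i,j}a_{ij}\,\partial_{x_i}y\,\partial_{x_j}y\,dx\le C\|y\|_V^2$ by \eqref{2.3};
\item the It\^o correction $\frac12\sum_i\lambda_i^2\int_D f_k'(y)\sigma^2(y)e_i^2\,dx$ is bounded by $C(1+\|y\|^2)$ thanks to \eqref{2.4};
\item $\int_D F_k(y(t))\,dx\ge-\frac{\lambda_f}{2}\|y(t)\|^2$, while $\int_D F_k(y_0)\,dx$ is bounded uniformly in $k$ because $y_0$ is bounded.
\end{itemize}
Combined with your Step~1 bound and passage $k\to\infty$, this gives $\mathbb{E}\int_0^N\|f(y_n)\|^2\,dt\le C_N$ uniformly in $n$. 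With it, a.e.\ convergence yields $f(\widetilde y_n)\rightharpoonup f(\widetilde y)$ weakly in $L^2(\widetilde\Omega\times[0,N];H)$. Hence $f(\widetilde y)\in L^2$, the density step is valid, and the $V'$-increment estimates go through. In your proposal (A3) is mentioned only in passing and never turned into an estimate; the one-sided energy bound you use instead is too weak in dimension $d\ge 2$.
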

\begin{definition}
The space $L_\gamma^2(\Omega_\infty;H)$ is the space of $\mathcal F_t$--adapted random processes
$z(t)$ endowed with the norm
\begin{equation*}
\|z\|_\gamma^2
=
\E\int_0^\infty
\|z(t)\|^2 e^{-\gamma t}\,dt .
\end{equation*}
\end{definition}

The next theorem establishes a connection between the optimal
control problems on finite and infinite time intervals.
Fix $T>0$ and consider the optimal control problem on the finite interval
$[0,T]$ with the cost functional

\begin{equation}
J_T
=
\E\int_0^T
\|y(t)\|^2 e^{-\gamma t}\,dt
+
E\int_0^T
\|u(t)\|^2\,dt .
\label{2.11}
\end{equation}
By Theorem \ref{Th:2.2}, this problem admits a weak martingale solution
$
\bigl(
\widetilde u_T^{\,*}(t),
\widetilde y_T^{\,*}(t)
\bigr),
$
and 
$
J_T^{\,*} = J_T(\widetilde u_T^{\,*})
$
is the minimal value of criterion \eqref{2.11}. Using $\widetilde u_T^{\,*}$, we construct an admissible control
for problem \eqref{1.1}--\eqref{1.3} as follows:

\begin{equation}
\widetilde u_{T,\infty}^{\,*}(t)
=
\begin{cases}
\widetilde u_T^{\,*}(t),
& t\in[0,T], \\[1ex]
0,
& t>T .
\end{cases}
\label{2.13}
\end{equation}
This control is admissible for the
infinite-horizon problem.

\begin{theorem}\label{Th:2.5}
Under the assumptions of Theorem \ref{2.4}, the following assertions hold:

\begin{enumerate}
\item
\[
J_T^{*}\to J^{*},
\qquad
T\to\infty.
\]

\item
The family $ \{\widetilde u_{T,\infty}^{\,*}\}$  is bounded in $
L^2(\widetilde\Omega_\infty;H)
$ for all $T \geq 0$. Moreover, for every weakly convergent subsequence
\[
\widetilde u_{T_n,\infty}^{\,*}
\rightharpoonup
\widetilde u,
\qquad
T_n\to\infty,
\]
we have
\[
J\bigl(\widetilde u_{T_n,\infty}^{\,*}\bigr)
\to
J^{*},
\qquad
T_n\to\infty.
\]
Thus, the subsequence $
\{\widetilde u_{T_n,\infty}^{\,*}\}
$ is minimizing for the problem
\eqref{1.1}--\eqref{1.3}.

\item $\widetilde u$ is a weak martingale optimal control for the problem
\eqref{1.1}--\eqref{1.3}.

\item We have
\[
\widetilde y\bigl(\cdot,
\widetilde u_{T_n,\infty}^{\,*}\bigr)
\to
\widetilde y(\cdot,\widetilde u)
\]
weakly in $L_\gamma^2(\widetilde\Omega_\infty;H),$ and $
\widetilde y(t,\widetilde u)
$ is an optimal trajectory of the problem
\eqref{1.1}--\eqref{1.3}.
\end{enumerate}
\end{theorem}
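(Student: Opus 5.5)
The plan is to compare the finite-horizon values $J_T^*$ with the infinite-horizon value $J^*$ through a uniform-in-time energy estimate, and then pass to the limit in the weak martingale framework exactly as in the proof of Theorem \ref{Th:2.2}.

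\textbf{Step 1: exponential-moment estimate.} Apply the energy equality \eqref{2.6} of Theorem \ref{Th:2.1} to $e^{-\gamma t}\|y(t)\|^2$ via Itô's formula. The terms are bounded as follows:
\begin{itemize}
\item ellipticity \eqref{1.4} and Poincaré give $\langle A_1y,y\rangle\le -\frac{\Lambda_A}{C_P}\|y\|^2$;
\item $(a_0 y,y)\le \|a_0\|_{L^\infty}\|y\|^2$;
\item since $f(0)=0$ and \eqref{2.3} holds, $(f(y),y)\le\lambda_f\|y\|^2$;
\item by \eqref{2.1}, $\|\sigma(y)\|_{L_2^0}^2\le \lambda\sup_i\|e_i\|_{L^\infty}^2(L_\sigma\|y\|+|\sigma(0)||D|^{1/2})^2$;
\item $2(u,y)\le \varepsilon\|y\|^2+\varepsilon^{-1}\|u\|^2$.
\end{itemize}
Because $\gamma>\mu$ by \eqref{2.10}, one can choose $\varepsilon$ small and absorb the constant terms. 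After taking expectations, this gives for every $t$
\[
\E\|y(t)\|^2e^{-\gamma' t}\le C\Bigl(\E\|y_0\|^2+\E\int_0^t\|u\|^2ds+1\Bigr)
\]
for some $\mu<\gamma'<\gamma$. Hence
\[
\E\int_T^\infty e^{-\gamma t}\|y(t)\|^2dt\le C e^{-(\gamma-\gamma')T}\Bigl(1+\E\int_0^\infty\|u\|^2\Bigr),
\]
uniformly over controls with bounded $L^2$ norm.

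\textbf{Step 2: convergence of values (assertion 1).}
\begin{itemize}
\item \emph{Upper bound.} Restrict the optimal $\widetilde u^*$ of Theorem \ref{Th:2.4} to $[0,T]$. This gives $J_T^*\le J_T(\widetilde u^*|_{[0,T]})\le J^*$, and $J_T^*$ is nondecreasing in $T$.
\item \emph{Lower bound.} Admissibility of $\widetilde u^*_{T,\infty}$ gives
\[
J^*\le J(\widetilde u^*_{T,\infty})=J_T^*+\E\int_T^\infty e^{-\gamma t}\|y\|^2dt.
\]
Since $\E\int_0^T\|\widetilde u_T^*\|^2\le J_T^*\le J^*$, Step 1 bounds the tail by $Ce^{-(\gamma-\gamma')T}(1+J^*)\to0$.
\end{itemize}
This proves $J_T^*\to J^*$. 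It also gives the boundedness and minimizing property in assertion 2, now for the full family $T\to\infty$.

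\textbf{Step 3: passage to the limit (assertions 3–4).} Take a weakly convergent subsequence $\widetilde u^*_{T_n,\infty}\rightharpoonup\widetilde u$. The family $y_n=\widetilde y(\cdot,\widetilde u^*_{T_n,\infty})$ is bounded in $L^2_\gamma$ and, on each $[0,N]$, in $L^2(\Omega_N;V)$. Then:
\begin{itemize}
\item Using the compactness/tightness argument underlying Theorem \ref{Th:2.2}, namely Skorokhod representation of the joint laws of $(y_n,\widetilde u_n,W)$ on each $[0,N]$ followed by a diagonal argument in $N$, obtain a limit that is a weak martingale solution corresponding to $\widetilde u$.
\item Identify the nonlinear term $f(y_n)\to f(y)$ using the strong $L^2_{loc}$ convergence together with \eqref{2.4}, as in that theorem.
\item Pass from local convergence to weak convergence in $L^2_\gamma$ by the uniform bound and the tail estimate.
\item Weak lower semicontinuity of both quadratic terms of $J$ yields $J(\widetilde u)\le\liminf J(\widetilde u^*_{T_n,\infty})=J^*$, so $\widetilde u$ is optimal and $\widetilde y(\cdot,\widetilde u)$ is an optimal trajectory.
\end{itemize}

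\textbf{Main obstacle.} I expect the hardest part to be Step 3: identifying the limit of the nonlinear drift under only the one-sided bound \eqref{2.3}. Weak convergence of controls does not directly give convergence of the states, so the Skorokhod/compactness machinery on bounded intervals must be combined with the uniform tail control from Step 1.
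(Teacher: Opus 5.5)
Your route matches the paper's proof of Theorem~\ref{Th:2.5}. You use the sandwich $J_T^*\le J^*\le J_T^*+\E\int_T^\infty e^{-\gamma t}\|y(t,\widetilde u^*_{T,\infty})\|^2dt$ from \eqref{4.23}--\eqref{4.24}, and the bound $\E\int_0^T\|\widetilde u_T^*\|^2dt\le J_T^*\le J^*$. You then localise to $[0,N]$ as in the proof of Theorem~\ref{Th:2.4}, control the tail, and finish with weak lower semicontinuity.

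\textbf{Step~1 does not hold under \eqref{2.10}.} The paper's Lemma~\ref{lem:3.5}, which its proof uses to get $\varphi(T)\to0$, has the same defect. In the energy equality the drift terms carry a factor $2$ and the It\^o correction does not. So your own list of bounds gives
\[
\frac{d}{dt}\E\|y\|^2\le(\kappa+\varepsilon)\E\|y\|^2+C_\varepsilon\bigl(1+\E\|u\|^2\bigr),\qquad \kappa:=2\Bigl(\lambda_f+\|a_0\|_{L^\infty(D)}-\frac{\Lambda_A}{C_P}\Bigr)+\lambda\sup_i\|e_i\|_{L^\infty(D)}^2L_\sigma^2 .
\]
Here $\kappa-\mu=\lambda_f+\|a_0\|_{L^\infty(D)}-\Lambda_A/C_P$ can be positive, and then $\gamma>\mu$ does not give you any $\gamma'<\gamma$. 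This is not merely a lossy estimate. Take $D=(0,\pi)$, $A_1=\partial_x^2$, $a_0=0$, $f(s)=\lambda_f s$ with $\lambda_f>1$, $\sigma(s)=\epsilon s$ and $y_0=\phi_1=\sqrt{2/\pi}\sin x$. Then $\Lambda_A/C_P=\lambda_1=1$ and $\mu=\lambda_f-1+O(\epsilon^2)$. With $y_1:=(y,\phi_1)$ and zero control, $\frac{d}{dt}\E y_1^2\ge 2(\lambda_f-1)\E y_1^2$, since the It\^o term is nonnegative. Now take $\gamma\in(\mu,2(\lambda_f-1))$, which is nonempty for small $\epsilon$. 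Then:
\begin{enumerate}
\item your Step~1 bound fails;
\item $J(0)=+\infty$;
\item since $\widetilde u^*_{T,\infty}\equiv0$ on $(T,\infty)$, the tail in your Step~2 lower bound is $+\infty$ whenever $\E y_1(T)^2>0$.
\end{enumerate}
Steps~1--2 become correct if \eqref{2.10} is replaced by $\gamma>\max(\kappa,0)$. The condition $\gamma>0$ is needed because your bound is uniform in $u$ and contains the constant coming from $\sigma(0)$, so it requires $\gamma'\ge0$.

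\textbf{Step~3 does not reach the given weak limit $\widetilde u$.} This gap is also inherited from the paper, whose proof of Lemma~\ref{lem:3.2} assumes $\mathcal L(\wwtilde u)=\mathcal L(\widetilde u)$. The Jakubowski--Skorokhod limit is a control whose law is a limit of the laws of $\widetilde u^*_{T_n,\infty}$ in the weak topology of $L^2(0,N;H)$. That need not be the law of the weak $L^2(\widetilde\Omega_\infty;H)$-limit $\widetilde u$. For example, let $g$ be deterministic and let $r_n$ be i.i.d.\ symmetric $\pm1$ variables, $\mathcal F_0$-measurable and independent of $(W,y_0)$. Then $u_n=r_ng\rightharpoonup0$, although every $u_n$ has the law of $\pm g$. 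By pathwise uniqueness,
\[
y(\cdot,u_n)=\tfrac{1+r_n}{2}\,y(\cdot,g)+\tfrac{1-r_n}{2}\,y(\cdot,-g)\rightharpoonup\tfrac12\bigl(y(\cdot,g)+y(\cdot,-g)\bigr),
\]
which for nonlinear $f$ generally differs from $y(\cdot,0)$. So Step~3 only shows that the representation limit is an optimal weak martingale pair. Assertions~3--4 for the given weak limit $\widetilde u$, including weak convergence of the states to $\widetilde y(\cdot,\widetilde u)$, need an extra argument or must be read in that representation sense.
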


Similar results for equations of type \eqref{1.1}
were obtained in \cite{21,22} in the deterministic setting,
while related results for functional-differential equations were
established in \cite{23}.

\section{Auxiliary Results} \label{Sec:3}

In this section we present several auxiliary results that will be
needed in the sequel. We make use of the following abstract result, which is a
generalization of Skorokhod theorem \cite{24}.

\begin{theorem}[Jakubowski {\cite{25}}] \label{Th:3.1}
Suppose that $(\mathcal{X},\mathcal T)$ is a topological space such that there
exists a countable family
$$\{f_N:\mathcal{X}\to[-1,1], N \geq 1\}$$ of $\mathcal T$--continuous functions separating points of $X$. Furthermore, assume that $\{X_N, N \geq 1\}$ is a sequence of
$\mathcal{X}$--valued random variables and that for every $M\in\mathbb N$
there exists a compact set $K_M\subset X$ such that the tightness conditions holds:
\[
P\{X_N\in K_M\}
>
1-\frac1M,
\qquad
N \geq 1.
\]
Then there exists a subsequence, again denoted by
$(X_N)_{N\in\mathbb N}$, and random variables
\[
\widetilde X,
\widetilde X_N:[0,1]\to \mathcal{X},
\]
where $[0,1]$ is equipped with the Borel $\sigma$--algebra, such
that $\mathcal L(X_N)
=
\mathcal L(\widetilde X_N)$,
and
\[
\widetilde X_N(\omega)\to \widetilde X(\omega),
\qquad
\omega\in[0,1],
\]
where the convergence is understood in the topology
$\mathcal T$.
\end{theorem}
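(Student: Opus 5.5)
Since the statement is Jakubowski's theorem, quoted from \cite{25}, the paper only needs to cite it. If I had to prove it, I would reduce to the classical Skorokhod theorem on a Polish space by means of the separating family. The central object is
\[
F:\mathcal X\to[-1,1]^{\mathbb N},\qquad F(x)=\bigl(f_1(x),f_2(x),\dots\bigr).
\]
It is continuous from $(\mathcal X,\mathcal T)$ into the compact metrizable product space, and injective because the $f_N$ separate points. Its restriction to any compact $K\subset\mathcal X$ is a continuous bijection onto the compact Hausdorff set $F(K)$, hence a homeomorphism $K\to F(K)$. This is the only place the topology of $\mathcal X$ enters. Without loss of generality I would take the $K_M$ increasing, replacing $K_M$ by $K_1\cup\dots\cup K_M$.

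The key steps, in order, are as follows.

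First, push the laws forward: set $\mu_N=\mathcal L(F(X_N))$ on $S:=[-1,1]^{\mathbb N}$. Since $S$ is compact Polish, Prokhorov's theorem gives a subsequence with $\mu_N\Rightarrow\mu$. Because the sets $F(K_M)$ are compact, hence closed, the portmanteau theorem yields
\[
\mu\bigl(F(K_M)\bigr)\ge\limsup_N\mu_N\bigl(F(K_M)\bigr)\ge 1-\tfrac1M .
\]
Therefore $\mu$ is concentrated on $\bigcup_M F(K_M)$, and $F^{-1}$ is well defined $\mu$-a.e. and Borel there.

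Second, apply the classical Skorokhod theorem on $[0,1]$. This gives $Y_N\to Y$ pointwise in $S$ with $\mathcal L(Y_N)=\mu_N$ and $\mathcal L(Y)=\mu$. I would then set $\widetilde X_N=F^{-1}(Y_N)$ and $\widetilde X=F^{-1}(Y)$. Measurability and $\mathcal L(\widetilde X_N)=\mathcal L(X_N)$ follow because $F$ is a Borel isomorphism on each $F(K_M)$.

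The main obstacle is transferring the convergence back to $\mathcal X$. Convergence $Y_N(\omega)\to Y(\omega)$ in $S$ only gives $\widetilde X_N(\omega)\to\widetilde X(\omega)$ in $\mathcal T$ if, for that $\omega$, all the points $Y_N(\omega)$ and $Y(\omega)$ lie in one and the same $F(K_M)$. This does not follow from a naive application of Skorokhod's theorem. To handle it, I would decompose each $\mu_N$ into the pieces $\mu_N^M:=\mu_N\bigl(\cdot\cap(F(K_M)\setminus F(K_{M-1}))\bigr)$, plus a remainder of mass at most $1/M$. By a diagonal argument I would pass to a subsequence along which every piece converges weakly, $\mu_N^M\Rightarrow\nu^M$, with the total masses converging. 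The limit $\nu^M$ is supported in the compact set $F(K_M)$. I would then partition $[0,1]$ into intervals whose lengths are these masses, slightly adjusted so that the interval lengths converge, and apply Skorokhod's construction separately on each interval to the normalized piece. On the $M$-th interval, both $Y_N$ and $Y$ then take values in $F(K_M)$, where $F^{-1}$ is continuous, so $\widetilde X_N\to\widetilde X$ in $\mathcal T$ there. The total mass tends to one, which leaves only a null set uncovered, and the resulting convergence holds for every $\omega$ outside it; redefining $\widetilde X_N$ and $\widetilde X$ on this null set, for instance as a constant, gives convergence for every $\omega\in[0,1]$ as stated. Making this bookkeeping of masses consistent, so that the marginal laws remain exactly $\mathcal L(X_N)$ while the intervals stay aligned in the limit, is the delicate part. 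For this step I would follow Jakubowski's original construction in \cite{25}.
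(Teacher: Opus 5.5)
Your proposal agrees with the paper, which gives no proof of this statement and simply quotes it from \cite{25}. Your outline is a faithful sketch of Jakubowski's argument: the injective map $F$ into $[-1,1]^{\mathbb N}$, which is a homeomorphism on each compact $K_M$, followed by a piecewise Skorokhod construction that keeps each trajectory inside a single $F(K_M)$. Deferring the bookkeeping step to \cite{25} is no weaker than what the paper itself does. If you want to close that step yourself, work on $[0,1]^2$: let $\omega_1$ pick the piece through consecutive intervals of the exact lengths $\mu_N\bigl(F(K_M)\setminus F(K_{M-1})\bigr)$, whose endpoints converge along your diagonal subsequence, and let $\omega_2$ feed a fixed Skorokhod representation of the normalized pieces, so that no rescaling of $\omega$ is needed; then transfer to $[0,1]$ by a measure-preserving Borel map.
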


\begin{lemma}\label{lem:3.2}
Suppose the conditions \eqref{1.4} and {{\bf\rm(A1)--(A3)}} hold.
Assume 

\[
 u_n \rightharpoonup u
\qquad\text{weakly in }L^2(\Omega_T;H),
\qquad
n\to\infty,
\]
then, along a subsequence,
\begin{equation} y(t, u_n)
\rightharpoonup
 y(t, u)
\qquad\text{weakly in }
L^2(\Omega_T;H),
\qquad
n\to\infty.
\label{3.1}
\end{equation}
For the ``cut--off'' problem \eqref{1.7} we have a similar result, namely, for every $k \ge 1$,
\begin{equation}
y_k(t, \widetilde u_n)
\rightharpoonup
y_k(t, \widetilde u)
\qquad\text{weakly in }
L^2(\Omega_T;H),
\qquad
n\to\infty.
\label{3.2}
\end{equation}
\end{lemma}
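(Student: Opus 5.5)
Since $u_n \rightharpoonup u$ in $L^2(\Omega_T;H)$, the sequence $\{u_n\}$ is bounded there. The plan is in three steps. First, derive uniform a priori bounds for $y_n := y(\cdot,u_n)$ in $L^2(\Omega_T;V)$ from the energy equality \eqref{2.6} of Theorem \ref{Th:2.1}. Second, extract a weakly convergent subsequence. Third, identify the limit by exploiting the monotone structure of the drift.

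\emph{Step 1 (a priori bounds).} Take expectations in \eqref{2.6}; the stochastic integral has mean zero after a standard localization. Then use the following facts:
\begin{itemize}
\item ellipticity \eqref{1.4} gives $\langle A_1y,y\rangle \le -\Lambda_A\|\nabla y\|^2$;
\item the bound $|(a_0y,y)| \le \|a_0\|_{L^\infty}\|y\|^2$;
\item by (A1) and $f(0)=0$, the one-sided estimate $(f(y),y) \le \lambda_f\|y\|^2$;
\item by (A2) and \eqref{2.1}, $\|\sigma(y)\|_{L_2^0}^2 \le C(1+\|y\|^2)$;
\item the bound $2(u,y)\le \|u\|^2+\|y\|^2$.
\end{itemize}
Gronwall's lemma then yields
\[
\sup_t \E\|y_n(t)\|^2 + \E\int_0^T\|y_n\|_V^2\,dt \le C\Bigl(1+\E\|y_0\|^2+\sup_n\E\int_0^T\|u_n\|^2dt\Bigr),
\]
uniformly in $n$. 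The same computation applies verbatim to $y_k(\cdot,u_n)$, since $f_k'\le \lambda_f$ as well.

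\emph{Step 2 (extraction).} By Step 1, a subsequence satisfies $y_n \rightharpoonup \bar y$ weakly in $L^2(\Omega_T;V)$, hence weakly in $L^2(\Omega_T;H)$. The weak limit $\bar y$ is adapted, because the adapted processes form a closed subspace.

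\emph{Step 3 (identification, the main obstacle).} It remains to show $\bar y = y(\cdot,u)$. The difficulty is that the nonlinear terms $f(y_n)$ and $\sigma(y_n)$ do not pass to the limit under weak convergence, and no compactness in $\omega$ is available on a fixed probability space. The plan is the Minty monotonicity trick on the fixed probability space, which is feasible because $f'\le\lambda_f$.

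Consider the shifted operator $\mathcal A(y)=Ay+f(y)-\kappa y$, with $\kappa$ large enough that $\mathcal A$ combined with the Lipschitz noise is monotone in the sense
\[
2\langle \mathcal A(y)-\mathcal A(z),y-z\rangle+\|\sigma(y)-\sigma(z)\|_{L_2^0}^2\le 0.
\]
Apply Itô's formula to $e^{-\kappa t}\|y_n(t)\|^2$ and take expectations. Then pass to the limit using:
\begin{itemize}
\item weak lower semicontinuity of the norm;
\item the fact that $\E\int(u_n,y_n)$ does \emph{not} converge directly.
\end{itemize}
The second point is the delicate part. To handle it, I would first exploit linearity in $u$ for the difference with the linearized equation. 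Concretely, write $y_n = z_n + w_n$, where $z_n$ solves the linear equation $dz=(Az+u_n)dt$ with $z(0)=0$; $z_n$ is linear in $u_n$ and converges weakly to $z$. Then $w_n$ solves an equation with the same structure but without the control term, to which the monotonicity argument applies. The goal is the usual inequality
\[
\E\int_0^T e^{-\kappa t}\bigl(2\langle \bar{\mathcal A}-\mathcal A(v),\bar y-v\rangle+\|\bar\Sigma-\sigma(v)\|^2\bigr)dt\le 0
\]
for all test processes $v$, where $\bar{\mathcal A}$ and $\bar\Sigma$ denote the weak limits of $\mathcal A(y_n)$ and $\sigma(y_n)$. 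Choosing $v=\bar y-\epsilon\phi$ and letting $\epsilon\to 0$ identifies $\bar{\mathcal A}=\mathcal A(\bar y)$ and $\bar\Sigma=\sigma(\bar y)$.

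This identification requires $f(y_n)$ to be bounded in a dual space. Here I would use (A3) together with the $L^\infty$ initial data, and possibly an Itô-formula bound on $\int F(y_n)$ where $F'=f$, to control $f(y_n)$ in $L^1$. This is the step most at risk. Once it is done, $\bar y$ is a weak solution with control $u$, and uniqueness from Theorem \ref{Th:2.1} gives $\bar y=y(\cdot,u)$, which proves \eqref{3.1}.

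For \eqref{3.2}, $f_k$ is globally Lipschitz. There the identification is easier: standard monotone-operator theory for Lipschitz perturbations applies directly, and uniqueness lets the full sequence converge.
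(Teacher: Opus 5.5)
\textbf{There is a genuine gap in Step 3, and it cannot be closed on a fixed probability space.} You are right that $\E\int_0^T(u_n,y_n)\,dt$ is a product of two merely weakly convergent sequences. The splitting $y_n=z_n+w_n$ does not remove this problem. The equation for $w_n$ has drift $Aw+f(z_n+w)$ and noise $\sigma(z_n+w)$. Since $z_n$ is a linear image of $u_n$, it converges only weakly in $\omega$. Hence the terms $f(z_n+v)$, $\sigma(z_n+v)$ in the Minty inequality do not converge to $f(z+v)$, $\sigma(z+v)$.

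\textbf{The conclusion you are aiming for is in fact false.} Take $\sigma\equiv0$, $y_0=0$, $f(s)=1-e^{s}$ (any non-odd $f$ works), and $\phi\in H\setminus\{0\}$. Let $u_n=r_n\phi$, where the $r_n$ are i.i.d.\ symmetric $\pm1$ variables that are $\mathcal F_0$-measurable (or $\mathcal F_{t_0}$-measurable, with the control switched on after $t_0$). By Bessel's inequality $u_n\rightharpoonup0$, while
\[
y(\cdot,u_n)=\tfrac12(Y_++Y_-)+\tfrac12\,r_n(Y_+-Y_-)\rightharpoonup\tfrac12(Y_++Y_-),
\]
where $Y_\pm$ are the deterministic solutions with forcing $\pm\phi$. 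Here $Y_++Y_-\not\equiv0=y(\cdot,0)$. Indeed, if $Y_-=-Y_+$, adding the two equations gives $f(Y_+)+f(-Y_+)\equiv0$. That forces $Y_+\equiv0$, and then $\phi=0$.

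The same example with $f_k$ in place of $f$ refutes your claim that \eqref{3.2} follows directly from monotone-operator theory because $f_k$ is Lipschitz. Lipschitz continuity does nothing about the cross term $(u_n,y_n)$. The paper's own argument for \eqref{3.2} uses this monotonicity step on the original space and is open to the same objection.

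\textbf{The missing idea is stochastic compactness with a change of probability space.} This is how the paper proves \eqref{3.1}.
\begin{enumerate}
\item It bounds $f_k(y_k(\cdot,u_n))$ in $L^2(\Omega_T;H)$, uniformly in $k,n$. This uses It\^o's formula for $\int_D F_k(y)\,dx$ with $F_k'=-f_k$, together with (A3), and is stronger than the $L^1$ control you were aiming for.
\item It obtains tightness of the laws of the states in $C([0,T];H)$, via the factorization method and compactness of $G_\alpha$.
\item It obtains tightness of the laws of $u_n$ in the weak topology of $L^2(0,T;H)$, and then applies Jakubowski's version of the Skorokhod theorem.
\end{enumerate}
On the new space, $\widetilde u_n\rightharpoonup\widetilde u$ pathwise and the states converge strongly in $C([0,T];H)$ almost surely. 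So $f$ and $\sigma$ pass to the limit directly, with no monotonicity needed. The limit is then identified as a martingale solution driven by $\widetilde u$.

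Note that $\widetilde u$ is the pathwise weak limit on the new space, and its law need not equal that of $u$. In the example above, $\widetilde u=\pm\phi$ with probability $\tfrac12$ each, whereas $u=0$. So the lemma can only hold in this weak-martingale sense, which is the setting in which the main theorems are phrased.
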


\begin{proof}
Let $y_k(t,u_n), \, y_k(t,u),$ and $y(t,u_n),\, y(t,u)$
be the solutions of the equations \eqref{1.7} and \eqref{1.1},
respectively. Recall that, by Theorem \ref{Th:2.1}, these solutions exist and are unique. We start with establishing  \eqref{3.2}. Since
\[
u_n \rightharpoonup u
\qquad\text{weakly in }L^2(\Omega_T;H),
\]
we have
\begin{equation}
E\int_0^T \|u_n(t)\|^2\,dt \le C,
\label{3.3}
\end{equation}
for some constant $C>0$ independent of $n$. Thus, using the energy equality for \eqref{1.7}, e.g. \cite[Theorem 2.1]{20}, we obtain 
\begin{equation}
\sup_{t\in[0,T]}
\E\|y_k(t,u_n)\|^2
+
\int_0^T
\E\|y_k(t,u_n)\|_V^2\,dt
\le
C_1(y_0,T,C),
\label{3.4}
\end{equation}
uniformly in $k$ and $n$, and
\begin{equation}
\E\int_0^T
\|\sigma(y_k(t,u_n))\|^2\,dt
\le
C_2(y_0,T,C),
\label{3.5}
\end{equation}
uniformly in $k$ and $n$. Therefore, for every fixed $k$, passing to a subsequence if
necessary, we obtain
\[
y_k(\cdot,u_n)
\rightharpoonup
z_k(\cdot)
\qquad\text{weakly in }
L^2(\Omega_T;V),
\qquad
n\to\infty,
\]
and
\[
\sigma(y_k(\cdot,u_n))
\rightharpoonup
\Phi_k(\cdot)
\qquad\text{weakly in }
L^2(\Omega_T;L_2^0),
\qquad
n\to\infty.
\]
In a similar way to \cite{20}, we consider the function
\[
F_k(s)
:=
-\int_0^s f_k(r)\,dr .
\]
Then
\[
F_k'(s)=-f_k(s),
 \text{  and   } 
F_k''(s)=-f_k'(s).
\]
Define
\[
G(y_k(t,u_n))
:=
\int_D
F_k\bigl(y_k(t,u_n)\bigr)\,dx .
\]
Applying It\^o's formula to $G$, we obtain
\begin{align}
& \nonumber G(y_k(t,u_n))-G(y_0)
 =
\int_0^t
\Bigl\langle
A_1y_k(s,u_n),
\,-f_k(y_k(s,u_n))
\Bigr\rangle ds
\\
\nonumber &\quad
-
\int_0^t
\bigl(
a_0y_k(s,u_n),
\,f_k(y_k(s,u_n))
\bigr)\,ds
-
\int_0^t
\bigl(
f_k(y_k(s,u_n)),
\,u_n(s)
\bigr)\,ds
\\
\nonumber &\quad
-
\sum_{i=1}^{\infty}
\lambda_i
\int_0^t
\int_D
f_k(y_k(s,u_n))
\sigma(y_k(s,u_n))
e_i(x)\,dx
\,d\beta_i(s) \\
&\quad
-
\frac12
\sum_{i=1}^{\infty}
\lambda_i^2
\int_0^t
\int_D
f_k'(y_k(s,u_n))
\sigma^2(y_k(s,u_n))
e_i^2(x)\,dx\,ds -\int_0^t
\|f_k(y_k(s,u_n))\|^2\, ds.
\label{3.6ast}
\end{align}
Using \eqref{1.4}, \eqref{2.3}, the Poincar\'e inequality,
and the boundedness of the coefficients $a_{ij}(x)$, we deduce that
\[
-\bigl\langle
A_1y_k,
f_k(y_k)
\bigr\rangle
\le
\widetilde C\,\|y_k\|_V^2,
\]
for some $\widetilde C>0$. Moreover,
\[
\bigl|
(a_0 y_k,f_k(y_k))
\bigr|
\le
\frac{\widetilde C_1}{\varepsilon}
\|y_k\|^2
+
\frac{\varepsilon}{2}
\|f_k(y_k)\|^2,
\]
and
\[
\bigl|
(f_k(y_k),u_n)
\bigr|
\le
\frac{\varepsilon}{2}
\|f_k(y_k)\|^2
+
\frac{1}{2\varepsilon}
\|u_n\|^2
\]
for some positive constants $\widetilde C_1$ and $\varepsilon>0$.
Using the mean value theorem together with
the assumption \eqref{2.3}, we obtain the following estimates
\[
G(y_k)
\ge
-\frac{\lambda_f}{2}\|y_k\|^2,
\]
and
\[
|G(y_0)|
\le
\widetilde C_2
\|y_0\|^2_{L^\infty(\Omega;L^\infty(D))},
\]
where
$ \widetilde C_2 = \widetilde C_2(y_0,f) >0.
$
(see \cite[proof of Theorem~2.1]{20} for details). Next, taking the expectation in \eqref{3.6ast}, for some positive constants we obtain
\begin{align*}
& -\frac{\lambda_f}{2}
E\|y_k(t,u_n)\|^2
\le
C_3
E\|y_0\|^2_{L^\infty(\Omega;L^\infty(D))}
+
C_4
\int_0^t
E\|y_k(s,u_n)\|_V^2\,ds \\
&\quad
+\frac{C_5}{\varepsilon}
\int_0^t
E\|y_k(s,u_n)\|^2\,ds
+
\varepsilon
\int_0^t
E\|f_k(y_k(s,u_n))\|^2\,ds
\nonumber\\
&\quad
+
\frac{T}{2\varepsilon}
\int_0^T
E\|u_n(s)\|^2\,ds
+
\frac12
\sum_{i=1}^{\infty}\lambda_i^2
E
\int_0^t
\int_D
f_k'(y_k(s,u_n(s)))
\sigma^2(y_k(s,u_n(s)))
e_i^2(x)\,dx\,ds
\nonumber \\
&\quad
-E\int_0^t
\|f_k(y_k(s,u_n(s)))\|^2\,ds.
\nonumber
\end{align*}
Combining \eqref{2.4} and \eqref{3.4}, we obtain
\begin{equation}
E\int_0^t
\|f_k(y_k(s,u_n(s)))\|^2\,ds
\le
C_6,
\label{3.6}
\end{equation}
where $C_6$ depends only on $y_0$ and $T$. Thus, passing to a subsequence if necessary, for any $k \geq 1$ we have
\begin{equation}
f_k(y_k(s,u_n(s)))
\rightharpoonup
\psi_k
\qquad
\text{weakly in }
L^2(\Omega_T;H),
\qquad
n\to\infty .
\label{3.7}
\end{equation}
Since both $\sigma$ and $f_k$ satisfy the Lipschitz condition
(and therefore the monotonicity condition) in a similar way to
\cite[Chapter~7, Theorem~7.5]{26} or
\cite[Theorem~4.24]{27}),
we conclude that
\[
f_k(y_k(t,u_n))
\rightharpoonup
f_k(z_k(t)),
\qquad
\sigma(y_k(t,u_n))
\to
\sigma(z_k(t)),
\qquad
n\to\infty .
\]
Next, similarly to \cite{20}, using Fubini's theorem and the
definition of a weak solution, for every
$\varphi\in L^\infty(\Omega_T)$ and 
$v\in V,
$
we may pass to the limit and obtain
\begin{align*}
& \E\int_0^T
\langle z_k(t),\varphi(t)v \rangle \,dt
=
\lim_{n\to\infty}
\E
\Biggl[
\int_0^T
\langle y_0,\varphi(t)v \rangle \,dt
\\
&\quad
+
\int_0^T
\int_0^t
\langle
A_1y_k(s,u_n(s)),
\varphi(t)v
\rangle
\,ds\,dt
+
\int_0^T
\int_0^t
(a_0y_k(s,u_n(s)),
\varphi(t)v)
\,ds\,dt
\\
&\quad
+
\int_0^T
\int_0^t
(u_n(s),\varphi(t)v)
\,ds\,dt
+
\int_0^T
\int_0^t
(f_k(y_k(s,u_n(s))),
\varphi(t)v)
\,ds\,dt
\\
&\quad
+
\int_0^T
\left(
\int_0^t
\sigma(y_k(s,u_n(s)))\,dW(s),
\varphi(t)v
\right)
dt
\Biggr].
\end{align*}
Hence $z_k(t)$ satisfies equation \eqref{1.7} with control
$u(t)$, and by uniqueness of solutions,
we obtain $z_k(t)=y_k(t,u)$. Thus, the statement \eqref{3.2} follows.

We now prove \eqref{3.1}. Let $n \ge 1$ be fixed. From \eqref{3.4}, \eqref{3.5}, and \eqref{3.6}, passing to a subsequence if necessary, we obtain
\begin{equation}
\begin{aligned}
y_k(\cdot,u_n)
&\rightharpoonup
z_k(\cdot)
&&\text{weakly in }
L^2(\Omega_T;V),\\
\sigma(y_k(\cdot,u_n))
&\rightharpoonup
\Phi_n(\cdot)
&&\text{weakly in }
L^2(\Omega_T;L_2^0),\\
f_k(y_k(\cdot,u_n))
&\rightharpoonup
\psi_n(\cdot)
&&\text{weakly in }
L^2(\Omega_T;H),
\end{aligned}
\qquad k\to\infty.
\label{3.8}
\end{equation}

Next, similarly to \cite{20}, using the stochastic Fubini theorem,
the factorization formula (see \cite{19}), and compactness of the
operator 
\[
(G_\alpha\varphi)(t)
=
\int_0^t
(t-s)^{\alpha-1}
S(t-s)\varphi(s)\,ds,
\qquad
\alpha\in\left(\frac12,1\right],
\]
as a map from
$L^2(0,T;H)$ to $
C([0,T];H),
$
we conclude that the sequence $\{\mathcal L(y_k(\cdot,u_n))\}$
is relatively compact with respect to weak convergence of measures on
\(C([0,T];H)\). Therefore, passing to a subsequence if necessary,
$\mathcal L(y_k(\cdot,u_n))
$
weakly converges to $\mu$. Next, we show that the family $\{u_n\}$ is tight. By Chebyshev's inequality and
\eqref{3.3},
\[
P\!\left(
\|u_n\|_{L^2((0,T);H)}
>R
\right)
\le
\frac1{R^2}
E\int_0^T
\|u_n(t)\|^2\,dt
\to 0,
\qquad
R\to\infty,
\]
uniformly in \(n\). The closed ball
\[
\left\{
u\in L^2((0,T);H):
\|u\|_{L^2((0,T);H)}\le R
\right\}
\]
is weakly compact in \(L^2((0,T);H)\) as well. Thus, by Theorem \ref{Th:3.1}, there exist a probability space
$(\widetilde\Omega,\widetilde{\mathcal F},\widetilde P)$,
a filtration $(\widetilde{\mathcal F}_t)$, an
$(\widetilde{\mathcal F}_t)$-adapted Wiener process
$\widetilde W(t)$, and random variables
$\widetilde u_n$, $\widetilde y_k$, $\widetilde z_n$, and
$\widetilde y_0$ such that

\[
\mathcal L(y_k)=\mathcal L(\widetilde y_k),\qquad
\mathcal L(\widetilde z_n)=\mu_n,\qquad
\mathcal L(u_n)=\mathcal L(\widetilde u_n),\qquad
\mathcal L(y_0)=\mathcal L(\widetilde y_0),
\]
with
$\widetilde y_k\to\widetilde z_n$ in $C([0,T];H)$ as
$k\to\infty$, and
$\widetilde u_n\rightharpoonup\widetilde u$ weakly in
$L^2((0,T);H)$ as $n\to\infty$, $\widetilde P$-a.s.
(passing to a subsequence if necessary). Since
$\mathcal L(y_k,u_n,W,y_0)
=\mathcal L(\widetilde y_k,\widetilde u_n,\widetilde W,\widetilde y_0)$,
the process $\widetilde y_k$ satisfies

\begin{equation}
\widetilde y_k
=
\widetilde y_0
+
\int_0^t
\bigl(
A_1\widetilde y_k
+
f(\widetilde y_k)
+
\widetilde u_n
\bigr)\,ds
+
\int_0^t
\sigma(\widetilde y_k)\,d\widetilde W(s)
\label{3.9}
\end{equation}
in $V$. For fixed $n \geq 1$, since $\widetilde{y_k}(\cdot, \widetilde{u_n})$ is bounded in $H$, for $k$ large enough $f_k(\widetilde{y_k}(\cdot, \widetilde{u_n})) \equiv f(\widetilde{y_k}(\cdot, \widetilde{u_n}))$, thus using Lemma 1.3 \cite{28} we have,
\begin{equation}\label{3.10}
  f_k(\widetilde{y_k}(\cdot, \widetilde{u_n})) \rightharpoonup f(\widetilde{z_n}) \ \text{ weakly in } L^2(\widetilde \Omega_T, H), \ k \to \infty, 
\end{equation}
and 
\begin{equation}\label{3.11}
  \sigma_k(\widetilde{y_k}(\cdot, \widetilde{u_n})) \rightharpoonup \sigma(\widetilde{z_n}) \ \text{ weakly in } L^2(\widetilde \Omega_T, L_2^0), \ k \to \infty.    
\end{equation}
Next, similarly to \cite{20}, using Fubini's theorem and the definition
of a weak solution, for every
$\varphi\in L^\infty(\widetilde\Omega_T)$ and $v\in V$, passing to the limit in the identity  

\begin{align*}
& \widetilde E\int_0^T
(\widetilde z_n(t),\varphi(t)v)\,dt
= \lim_{k\to\infty}
\widetilde E
\int_0^T
(\widetilde y_k,\varphi(t)v)\,dt = 
\lim_{k\to\infty}
\widetilde E
\Biggl[
\int_0^T
(\widetilde y_0,\varphi(t)v)\,dt
\\
&\quad
+
\int_0^T\!\!\int_0^t
\langle
A_1\widetilde y_k(s),
\varphi(t)v
\rangle
\,ds\,dt
+
\int_0^T\!\!\int_0^t
(a_0\widetilde y_k(s),
\varphi(t)v)
\,ds\,dt
\\
&\quad
+
\int_0^T\!\!\int_0^t
(f(\widetilde y_k(s)),
\varphi(t)v)
\,ds\,dt
+
\int_0^T\!\!\int_0^t
(\widetilde u_n(s),
\varphi(t)v)
\,ds\,dt
\\
&\quad
+
\int_0^T
\left(
\int_0^t
\sigma(\widetilde y_k(s))
\,d\widetilde W(s),
\varphi(t)v
\right) \, dt
\Biggr],
\end{align*}
we obtain that $\widetilde{z}_n$ is a martingale weak solution of
equation \eqref{1.1} corresponding to the control $\widetilde{u}_n$,
that is, $\widetilde{z}_n=y(t,\widetilde{u}_n)$. By \eqref{3.6} and
Fatou's lemma, we have
\begin{equation}
\widetilde{\mathbb{E}}
\int_0^T
\left\|
f\bigl(\widetilde{y}(t,\widetilde{u}_n(t))\bigr)
\right\|^2\,dt
\leq C_6.
\label{3.11}
\end{equation}
Moreover, analogously to the preceding arguments, we obtain
\[
\sup_{t\in[0,T]}
\widetilde{\mathbb{E}}
\left\|
\widetilde{y}(t,\widetilde{u}_n)
\right\|^2
+
\int_0^T
\widetilde{\mathbb{E}}
\left\|
\widetilde{y}(t,\widetilde{u}_n(t))
\right\|_V^2\,dt
\leq
C_7(\widetilde{y}_0,T,C),
\]
and
\[
\widetilde{\mathbb{E}}
\int_0^T
\left\|
\sigma\bigl(\widetilde{y}(t,\widetilde{u}_n)\bigr)
\right\|^2\,dt
\leq
C_8(\widetilde{y}_0,T,C_1).
\]
Therefore,
\[
\widetilde{z}_n
\rightharpoonup
\widetilde{z}
\quad\text{weakly in }
L^2(\widetilde{\Omega}_T;V),
\]
\[
\sigma(\widetilde{z}_n)
\rightharpoonup
\widetilde{\Phi}
\quad\text{weakly in }
L^2(\widetilde{\Omega}_T;L_2^0),
\]
\[
f(\widetilde{z}_{u_n})
\rightharpoonup
\widetilde{\Psi}
\quad\text{weakly in }
L^2(\widetilde{\Omega}_T;H),
\]
and
\[
\widetilde{u}_n
\rightharpoonup
\widetilde{u}
\quad\text{weakly in }
L^2(\widetilde{\Omega}_T;H)
\]
as $n\to\infty$.
Next, analogously to the arguments above, we establish the weak
compactness of $\mathcal{L}(\widetilde{z}_n)$ in $C([0,T];H)$
and the weak compactness of $\{\mathcal{L}(\widetilde{u}_n)\}$.
Hence, again by Theorem \ref{Th:3.1}, there exist a probability space
\[
\bigl(
\wwtilde{\Omega},
\wwtilde{\mathcal{F}},
\wwtilde{P}
\bigr),
\]
a Wiener process $\wwtilde{W}(t)$, and random variables
\[
\wwtilde{u}_n,\quad
\wwtilde{z}_n,\quad
\wwtilde{u},\quad
\wwtilde{z},\quad
\wwtilde{y}_0
\]
such that
\[
\mathcal{L}
\bigl(
\wwtilde{u}_n,
\wwtilde{z}_n, \wwtilde{W}
\wwtilde{y}_0
\bigr)
=
\mathcal{L}
\bigl(
\widetilde{u}_n,
\widetilde{z}_n, \widetilde{W},
\widetilde{y}_0
\bigr),
\]
\[
\mathcal{L}(\wwtilde{z})
=
\mathcal{L}(\widetilde{z}),
\qquad
\mathcal{L}(\wwtilde{u})
=
\mathcal{L}(\widetilde{u}),
\]
and, as $n\to\infty$,
\[
\wwtilde{z}_n
\longrightarrow
\wwtilde{z}
\quad\text{in }C([0,T];H),
\qquad
\wwtilde{P}\text{-a.s.},
\]
as well as
\[
\wwtilde{u}_n
\rightharpoonup
\wwtilde{u}
\quad\text{weakly in }L^2((0,T);H),
\qquad
\wwtilde{P}\text{-a.s.}
\]

From Lemma 1.3 \cite{28}, we also obtain
\[
f(\wwtilde{z}_n)
\longrightarrow
f(\wwtilde{z}),
\qquad n\to\infty,
\qquad
\wwtilde{P}\text{-a.s.}
\]

Then, passing to the limit in the equality 
\[
\begin{aligned}
& \bigl(\wwtilde{z}(t),\varphi(t)v\bigr)
=
\lim_{n\to\infty}
\wwtilde{\mathbb{E}}
\Bigg[
\int_0^T
\left(
\wwtilde{y}_0,\varphi(t)v
\right) dt +
\int_0^T\int_0^t
\left\langle
A_1\wwtilde{z}_n(s),\varphi(t)v
\right\rangle ds\,dt
\\
&\quad+
\int_0^T\int_0^t
\bigl(
a_0\wwtilde{z}_n(s),\varphi(t)v
\bigr)\,ds\,dt +
\int_0^T\int_0^t
\bigl(
f(\wwtilde{z}_n(s)),\varphi(t)v
\bigr)\,ds\,dt
\\
&\quad+
\int_0^T\int_0^t
\bigl(
\wwtilde{u}_n(s),\varphi(t)v
\bigr)\,ds\,dt +
\int_0^T\int_0^t
\bigl(
\sigma(\wwtilde{z}_n(s))\,d\wwtilde{W}(s),
\varphi(t)v
\bigr)\,dt
\Bigg],
\end{aligned}
\]
we obtain that $\wwtilde{z}(t)$ is the solution of equation
\eqref{1.1} corresponding to $\wwtilde{u}$, that is,
\[
\wwtilde{z}(t)
=
\wwtilde{y}(t,\wwtilde{u}).
\]
The proof of Lemma \ref{lem:3.2} is complete.
\end{proof}

\begin{lemma}\label{lem:3.3}
Under condition \eqref{1.4} and assumptions \textnormal{(A1)--(A3)},
for every weak admissible control $\widetilde{u}$, we have
\begin{equation}
\widetilde{y}_k(\cdot,\widetilde{u})
\longrightarrow
\widetilde{y}(\cdot,\widetilde{u})
\quad\text{in }
L^2(\widetilde{\Omega}_T;H),
\qquad k\to\infty.
\label{3.11}
\end{equation}
\end{lemma}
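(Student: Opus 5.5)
We prove Lemma~\ref{lem:3.3} by a direct energy estimate for the difference $w_k:=\widetilde y_k(\cdot,\widetilde u)-\widetilde y(\cdot,\widetilde u)$ on the fixed probability space carrying $\widetilde u$, $\widetilde W$, $\widetilde y_0$. Both processes are driven by the same control, the same noise and the same initial datum. By Theorem~\ref{Th:2.1}, applied both to \eqref{1.1} and to \eqref{1.7}, they are the unique weak (hence strong in the probabilistic sense) solutions on that space. The difference therefore satisfies
\[
dw_k=\bigl(Aw_k+f_k(\widetilde y_k)-f(\widetilde y)\bigr)\,dt+\bigl(\sigma(\widetilde y_k)-\sigma(\widetilde y)\bigr)\,d\widetilde W,\qquad w_k(0)=0 .
\]
We split the nonlinearity as
\[
f_k(\widetilde y_k)-f(\widetilde y)=\bigl(f_k(\widetilde y_k)-f_k(\widetilde y)\bigr)+\bigl(f_k(\widetilde y)-f(\widetilde y)\bigr).
\]
Since $f_k'=f'(P_k)P_k'\le\lambda_f$ a.e., the first bracket is one-sided Lipschitz: $(f_k(a)-f_k(b),a-b)\le\lambda_f|a-b|^2$, uniformly in $k$.

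Applying the energy equality (It\^o's formula for $\|w_k\|^2$, as in \eqref{2.6}) and taking expectations, we use \eqref{1.4} with the Poincar\'e inequality for $\langle A_1w_k,w_k\rangle$, the bound $\|a_0\|_{L^\infty}$, and the one-sided bound above. For the noise we use \eqref{2.2} with (A2), which gives $\lambda\sup_i\|e_i\|^2_{L^\infty}L_\sigma^2\,\E\|w_k\|^2$. For the remainder term we use Young's inequality $2(g_k,w_k)\le\|g_k\|^2+\|w_k\|^2$, where $g_k:=f_k(\widetilde y)-f(\widetilde y)$. This yields
\[
\widetilde\E\|w_k(t)\|^2\le C\int_0^t\widetilde\E\|w_k(s)\|^2\,ds+\widetilde\E\int_0^T\|g_k(s)\|^2\,ds ,
\]
with $C$ independent of $k$. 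Gronwall's inequality then reduces the claim to showing $\widetilde\E\int_0^T\|g_k\|^2\,dt\to0$. A localization by stopping times is needed to justify dropping the martingale term; it is standard, since all terms are square integrable.

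The main obstacle is this last convergence, because $f$ need not be of polynomial growth. Pointwise in $(\omega,t,x)$ we have $g_k=0$ as soon as $k\ge|\widetilde y|$, so $g_k\to0$ a.e. Moreover, since $f_k(s)=f(\pm k)$ for $|s|>k$, we get $|g_k|\le|f(\widetilde y)|+|f(P_k\widetilde y)|$. We would use the monotone structure: for $s>k$, $f(k)-f(s)$ and $f(s)$ are controlled because $f$ is $C^1$ with $f'\le\lambda_f$. The key fact we need is $f(\widetilde y)\in L^2(\widetilde\Omega_T;H)$, which is exactly estimate \eqref{3.11} (the bound $C_6$ obtained from Itô's formula for $G$ together with (A3), then Fatou's lemma). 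Then $|f(P_k s)|\le|f(s)|+\lambda_f|s|+C$ would follow if $f$ is eventually monotone on each half-line. If that fails, we would instead bound $\|f_k(\widetilde y)\|$ uniformly by repeating the computation \eqref{3.6ast}--\eqref{3.6} with $F_k$ evaluated along $\widetilde y$, or use the uniform bound \eqref{3.6} on $f_k(\widetilde y_k)$ and pass through weak limits. Once a dominating function in $L^2$ is available, dominated convergence gives $\widetilde\E\int_0^T\|g_k\|^2\to0$, and Gronwall completes the proof of \eqref{3.11}.
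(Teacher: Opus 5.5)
Your argument is correct and takes a genuinely different route from the paper.

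\textbf{How the two proofs differ.} The paper argues by compactness. It uses Jakubowski's theorem to get weak convergence and a.s.\ convergence in $C([0,T];H)$ on an auxiliary space. It then truncates the control to $\widetilde u_M$ and proves uniform $p$-th moment bounds ($p\ge 4$) via BDG and stopping times. Vitali's theorem upgrades weak to strong convergence for bounded controls. Finally, the truncation is removed with \eqref{3.32}--\eqref{3.33} and an $\varepsilon/3$ argument.

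You stay on the given stochastic basis, use pathwise uniqueness, and run one energy estimate on $w_k=\widetilde y_k-\widetilde y$. The key input is that $f_k$ is one-sided Lipschitz with constant $\lambda_f$ uniformly in $k$. This reduces everything to showing $\widetilde{\mathbb E}\int_0^T\|f(P_k\widetilde y)-f(\widetilde y)\|^2\,dt\to 0$.

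What your route buys:
\begin{enumerate}
\item It is shorter.
\item It gives the stronger, quantitative bound $\sup_{t\le T}\widetilde{\mathbb E}\|w_k(t)\|^2\le e^{CT}\,\widetilde{\mathbb E}\int_0^T\|g_k\|^2\,dt$.
\item It avoids the question of how a.s.\ convergence on a Skorokhod space transfers back to the original $\widetilde\Omega$.
\end{enumerate}
It uses nothing the paper does not already rely on. The paper's \eqref{3.33} rests on the same one-sided Lipschitz difference estimate for \eqref{1.1}. It also needs the same integrability $f(\widetilde y)\in L^2(\widetilde\Omega_T;H)$, which comes from \eqref{3.6} and Fatou's lemma as in the proof of Lemma~\ref{lem:3.2}. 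You correctly identify that this integrability is what justifies both your It\^o formula for $\|w_k\|^2$ and your dominated convergence step.

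\textbf{One point to tidy: the domination of $g_k$.} Your hedge about eventual monotonicity is unnecessary. The fallbacks you list would also not suffice. A uniform $L^2$ bound on $f_k(\widetilde y)$ or on $f_k(\widetilde y_k)$ gives no uniform integrability of $|g_k|^2$. At best it gives weak convergence of $g_k$, not the strong convergence your Gronwall step needs.

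The domination holds in general from $f(0)=0$ and $f'\le\lambda_f$ alone:
\begin{enumerate}
\item For $s>k$: $f(k)=\int_0^k f'\le\lambda_f k$, and $f(k)\ge f(s)-\lambda_f(s-k)$. Hence $|f(k)|\le|f(s)|+\lambda_f|s|$.
\item For $s<-k$: $f(-k)\ge-\lambda_f k$, and $f(-k)\le f(s)+\lambda_f(|s|-k)$. Hence $|f(-k)|\le|f(s)|+\lambda_f|s|$.
\end{enumerate}
Thus $|f(P_k s)|\le|f(s)|+\lambda_f|s|$ for all $s$ and $k$. Consequently $|g_k|\le 2|f(\widetilde y)|+\lambda_f|\widetilde y|$, which lies in $L^2(\widetilde\Omega_T;H)$. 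Dominated convergence then finishes the proof exactly as you intended.
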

\begin{proof}
Let $\widetilde{u}\in L^2(\widetilde{\Omega}_T;H)$. Analogously to
the proof of the preceding lemma, there exists a probability space
$(\overline{\Omega},\overline{\mathcal{F}},\overline{P})$ and
corresponding random variables such that
\begin{equation}\label{3.12}
\tilde{y}_k(\cdot,\tilde{u})
\rightharpoonup
\tilde{y}(\cdot,\tilde{u})
\quad\text{ weakly in }
L^2(\overline{\Omega}_T;H),
\qquad k\to\infty
\end{equation}
and
\begin{equation}
\widetilde{y}_k(\cdot,\widetilde{u})
\longrightarrow
\widetilde{y}(\cdot,\widetilde{u})
\quad\text{in }C([0,T];H),
\qquad k\to\infty,
\qquad \widetilde{P}\text{-a.s.}
\label{3.13}
\end{equation}
For any $M>0$, we define
\begin{equation}
\widetilde{u}_M(t,x,\widetilde{\omega})
=
\begin{cases}
\widetilde{u}(t,x,\widetilde{\omega}),
& \left|\widetilde{u}(t,x,\widetilde{\omega})\right|\leq M,\\
M,
& \text{otherwise}.
\end{cases}
\label{3.14}
\end{equation}
Using the dominated convergence theorem, we have
\begin{equation}
\widetilde{\mathbb{E}}
\int_0^T
\left\|
\widetilde{u}_M(t)-\widetilde{u}(t)
\right\|^2\,dt
\longrightarrow 0,
\qquad M\to\infty.
\label{3.15}
\end{equation}

Denote $\widetilde{y}_k(t,\widetilde{u}_M)$ and
$\widetilde{y}(t,\widetilde{u}_M)$ to be the weak martingale solutions of
\eqref{1.7} and \eqref{1.1}, respectively, corresponding to the
admissible control $\widetilde{u}_M$. By definition,
\[
\left|
\widetilde{u}_M(t,x,\widetilde{\omega})
\right|
\leq M.
\]

Now we show that
\begin{equation} \label{3.16}
\widetilde{\mathbb{E}}
\int_0^T
\|
\widetilde{y}_k(t,\widetilde{u}_M)\|^2 \to \widetilde{\mathbb{E}}
\int_0^T
\|
\widetilde{y}(t,\widetilde{u}_M)\|^2,
\qquad k\to\infty.
\end{equation}
We will be using the following estimates:
\[
\left\langle
A_1\widetilde{y}_k,\widetilde{y}_k
\right\rangle
\leq
-\Lambda_A
\left\|
\widetilde{y}_k
\right\|_V^2,
\tag{*1}
\]
\[
\begin{aligned}
\left\|
\sigma(\widetilde{y}_k)
\right\|_{L_2^0}^2
&=
\sum_i \lambda_i^2
\int_D
\sigma_i^2(\widetilde{y}_k)
e_i(x)^2\,dx \leq
\lambda \sup_{i \geq 1}
\sup_D |e_i|^2
L_\sigma^2
\left(
1+\left\|\widetilde{y}_k\right\|^2
\right),
\end{aligned}
\tag{*2}
\]
\[
\bigl(
f_k(\widetilde{y}_k),\widetilde{y}_k
\bigr)
=
\int_D
\widetilde{y}_k
f_k(\widetilde{y}_k)\,dx
\leq
\Lambda_f
\left\|
\widetilde{y}_k
\right\|^2,
\tag{*3}
\]
and
\[
\bigl(
a_0\widetilde{y}_k,\widetilde{y}_k
\bigr)
\leq
C_8
\left\|
\widetilde{y}_k
\right\|^2,
\qquad
\bigl(
\widetilde{u}_M,\widetilde{y}_k
\bigr)
\leq
\frac{1}{2}
\left(
M^2+\left\|\widetilde{y}_k\right\|^2
\right).
\tag{*4}
\]
To simplify the notation, let us denote $\widetilde y_k:= y_k$.
For $p\geq 4$, we apply Itô's formula to
$\|y_k(t,u_M)\|^p$. We have
\begin{align}
\|y_k(t)\|^p
={}&
\|y_0\|^p
+
p\sum_{r=1}^{\infty}\lambda_r
\int_0^t
\|y_k(s)\|^{p-2}
\bigl(
\sigma(y_k(s)),
y_k(s)
\bigr)\,d\beta_r(s)
\nonumber\\
&+
p\int_0^t
\|y_k(s)\|^{p-2}
\Bigl[
\left\langle
A_1 y_k(s),
y_k(s)
\right\rangle
+
\bigl(
a_0 y_k(s),
y_k(s)
\bigr)
\nonumber\\
&\hspace{4cm}
+
\bigl(
\widetilde u_M(s),
y_k(s)
\bigr)
+
\bigl(
f_k(y_k(s)),
y_k(s)
\bigr)
\Bigr]\,ds
\nonumber\\
&+
\frac{p}{2}
\int_0^t
\|y_k(s)\|^{p-2}
\left\|
\sigma(y_k(s))
\right\|_{L_2^0}^2\,ds
\nonumber\\
&+
\frac{p(p-2)}{2}
\sum_{r=1}^{\infty}\lambda_r^2
\int_0^t
\|y_k(s)\|^{p-4}
\bigl(
\sigma(y_k(s)),
y_k(s)
\bigr)^2\,ds,
\qquad P\text{-a.s.}
\label{3.22}
\end{align}

Next, for any $R>0$, we introduce the stopping times
\[
\tau_R
:=
\inf
\left\{
t\in[0,T]:
\|y_k(t)\|>R
\right\}
\wedge T.
\]
Clearly 
\begin{equation}
\tau_R \to T
\quad\text{as }R\to\infty,
\qquad
\widetilde{P}\text{-a.s.}
\label{3.23}
\end{equation}
The Burkholder--Davis--Gundy inequality (see, e.g.
\cite[Theorem 3.28]{29}), implies

\begin{align}\label{3.24}
&\widetilde{\mathbb{E}}
\sup_{s\in[0,t\wedge\tau_R]}
\left|
\int_0^s
\|y_k(r)\|^{p-2}
\bigl(
\sigma(y_k(r)),
y_k(r)
\bigr)\,d\beta_r(r)
\right|
\nonumber\\
& \qquad \leq
3\widetilde{\mathbb{E}}
\left(
\int_0^{t\wedge\tau_R}
\|y_k(s)\|^{2p-4}
\bigl(
\sigma(y_k(s)),
y_k(s)
\bigr)^2\,ds
\right)^{1/2}.
\end{align}
Using the assumption \textnormal{(A2)}, we obtain
\begin{align}
\bigl(
\sigma(y_k(s)),
y_k(s)
\bigr)^2
&\leq
\int_D
\left|
\sigma(y_k(s,x))
\right|^2\,dx
\int_D
y_k^2(s,x)\,dx
\nonumber\\
&\leq
L_\sigma^2
\int_D
\left(
1+
|y_k(s,x)|^2
\right)\,dx\,
\|y_k(s)\|^2
\nonumber\\
&\leq
C_8\|y_k(s)\|^2
+
C_9\|y_k(s)\|^4,
\label{3.25}
\end{align}
where the constants $C_8>0$ and $C_9>0$ depend only on
$L_\sigma$ and $\operatorname{meas}(D)$. Using \eqref{3.25}, the expression in \eqref{3.24} may be estimated as follows:
\begin{align}
& \widetilde{\mathbb{E}}
\left(
\int_0^{t\wedge\tau_R}
\|y_k(s)\|^{2p-4}
\bigl(
\sigma(y_k(s)),
y_k(s)
\bigr)^2\,ds
\right)^{1/2} \\
\nonumber
& \leq 
C_{10}\widetilde{\mathbb{E}}
\left(
\int_0^{\tau_R\wedge t}
\|y_k(s)\|^{2p-2}\,ds
+
\int_0^{\tau_R\wedge t}
\|y_k(s)\|^{2p}\,ds
\right)^{1/2}
\nonumber\\
& \leq
C_{10}
\left[
\widetilde{\mathbb{E}}
\left(
\int_0^{\tau_R\wedge t}
\|y_k(s)\|^{2p-2}\,ds
\right)^{1/2}
+
\widetilde{\mathbb{E}}
\left(
\int_0^{\tau_R\wedge t}
\|y_k(s)\|^{2p}\,ds
\right)^{1/2}
\right] =: I_1+I_2.
\label{3.26}
\end{align}

For $I_1$ we may conclude that for some
constant $C_{11}>0$ we have
\begin{equation}
I_1
\leq
C_{11}
\left(
\mathbb{E}
\int_0^{\tau_R\wedge t}
\|y_k(s)\|^p\,ds
+1
\right).
\label{3.27}
\end{equation}
To estimate $I_2$, by Young's inequality,
\begin{align}
I_2
&\leq
C_{10}\widetilde{\mathbb{E}}
\left(
\sup_{s\in[0,\tau_R\wedge t]}
\|y_k(s)\|^p
\int_0^{\tau_R\wedge t}
\|y_k(s)\|^p\,ds
\right)^{1/2}
\nonumber\\
&\leq
C_{12}\varepsilon
\widetilde{\mathbb{E}}
\sup_{s\in[0,\tau_R\wedge t]}
\|y_k(s)\|^p
+
\frac{C_{13}}{\varepsilon}
\widetilde{\mathbb{E}}
\int_0^{\tau_R\wedge t}
\|y_k(s)\|^p\,ds,
\label{3.28}
\end{align}
where $\varepsilon>0$ can be chosen arbitrarily small.
Next, we estimate \eqref{3.22}:
\begin{align}
\int_0^t
\|{y}_k(s)\|^{p-4}
\bigl(
\sigma({y}_k(s)),{y}_k(s)
\bigr)^2\,ds
&\leq
C_{15}
\left(
\int_0^t
\|{y}_k(s)\|^{p-2}\,ds
+
\int_0^t
\|{y}_k(s)\|^p\,ds
\right)
\nonumber\\
&\leq
C_{16}
\left(
\int_0^t
\|{y}_k(s)\|^p\,ds
+1
\right).
\label{3.29}
\end{align}

The remaining terms in \eqref{3.22} are estimated analogously,
using estimates \textnormal{(*1)--(*4)}. Thus, from
\eqref{3.28}, \eqref{3.29}, and \eqref{3.22}, we obtain
\[
\begin{aligned}
\widetilde{\mathbb{E}}
\sup_{s\in[0,t\wedge\tau_R]}
\|{y}_k(s,\widetilde{u}_M)\|^p
\leq
C_{17}
\Biggl(
\widetilde{\mathbb{E}}\|{y}_0\|^p
+
\int_0^t
\widetilde{\mathbb{E}}
\sup_{\nu\in[0,s\wedge\tau_R]}
\|{y}_k(\nu,\widetilde{u}_M)\|^p\,ds
\Biggr),
\end{aligned}
\]
where $C_{17}$ is independent of $M$. Hence, Gronwall's inequality implies
\[
\widetilde{\mathbb{E}}
\sup_{s\in[0,t\wedge\tau_R]}
\|{y}_k(s,\widetilde{u}_M)\|^p
\leq
C_{18}\widetilde{\mathbb{E}}\|{y}_0\|^p.
\]
Passing to the limit $R \to \infty$ using Fatou's lemma, we get 
\begin{equation}
\widetilde{\mathbb{E}}
\sup_{s\in[0,T]}
\|\widetilde{y}_k(s,\widetilde{u}_M)\|^p
\leq
C_{18}\widetilde{\mathbb{E}}\|\widetilde{y}_0\|^p.
\label{3.30}
\end{equation}
Thus, using Vitali's convergence theorem, we get \eqref{3.16}, which, in conjunction with \eqref{3.12}, for every $M>0$ yields 
\begin{equation}
{y}_k(\cdot,\widetilde{u}_M)
\longrightarrow
{y}(\cdot,\widetilde{u}_M)
\quad\text{in }L^2(\widetilde{\Omega}_T;H),
\qquad k\to\infty.
\label{3.31}
\end{equation}
For any $k$, by Itô's formula, Gronwall's inequality and \eqref{3.15}, we obtain
\begin{equation}
\sup_k \sup_{t\in[0,T]}
\widetilde{\mathbb{E}} \left\|
{y}_k(t,\widetilde{u})
-
{y}_k(t,\widetilde{u}_M)
\right\|^2
\leq
C_{19}
\int_0^T
\widetilde{\mathbb{E}}
\left\|
\widetilde{u}(s)-\widetilde{u}_M(s)
\right\|^2\,ds
\to 0,
\qquad M\to\infty.
\label{3.32}
\end{equation}
In a similar way, we obtain
\begin{equation}
\widetilde{\mathbb{E}}
\sup_{t\in[0,T]}
\left\|
{y}(t,\widetilde{u})
-
{y}(t,\widetilde{u}_M)
\right\|^2
\to 0,
\qquad M\to\infty.
\label{3.33}
\end{equation}
This way
\begin{align}
& \widetilde{\mathbb{E}}
\int_0^T
\left\|
{y}_k(t,\widetilde{u})
-
{y}(t,\widetilde{u})
\right\|^2\,dt
\leq
3\widetilde{\mathbb{E}}
\int_0^T
\left\|
{y}_k(t,\widetilde{u})
-
{y}_k(t,\widetilde{u}_M)
\right\|^2\,dt
\nonumber\\
&+
3\widetilde{\mathbb{E}}
\int_0^T
\left\|
{y}_k(t,\widetilde{u}_M)
-
{y}(t,\widetilde{u}_M)
\right\|^2\,dt
\nonumber +
3\widetilde{\mathbb{E}}
\int_0^T
\left\|
{y}(t,\widetilde{u}_M)
-
{y}(t,\widetilde{u})
\right\|^2\,dt
=: I_1+I_2+I_3.
\label{3.34}
\end{align}
In view of \eqref{3.32} and \eqref{3.33}, for any $\varepsilon>0$, choose $M=M(\varepsilon)$ sufficiently
large, so that uniformly in $k \geq 1$
\[
I_1<\frac{\varepsilon}{3},
\qquad \text{ and }
I_3<\frac{\varepsilon}{3}.
\]
For such $M$, using \eqref{3.31}, we may now choose
$k$ sufficiently large so that
\[
I_2<\frac{\varepsilon}{3}.
\]
Thus, the proof of Lemma \ref{lem:3.3} is now complete.
\end{proof}
\begin{lemma}\label{lem:3.4}
Let the condition \eqref{1.4} and assumptions \textnormal{(A1)--(A3)}
hold. If the admissible controls satisfy
\[
u_k \rightharpoonup u
\quad\text{weakly in }L^2(\Omega_T;H),
\qquad k\to\infty,
\]
then, along a subsequence,
\[
\widetilde{y}_k(\cdot,\widetilde{u}_k)
\rightharpoonup
\widetilde{y}(\cdot,\widetilde{u})
\quad\text{weakly in }
L^2(\widetilde{\Omega}_T;H),
\qquad k\to\infty.
\]
\end{lemma}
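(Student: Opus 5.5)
We will combine the diagonal version of the two arguments in the proof of Lemma \ref{lem:3.2}, now with $k$ and the control index moving together. The starting point is the uniform bound \eqref{3.3} for $\{u_k\}$, which holds because $u_k$ converges weakly. With it, the energy estimates \eqref{3.4}, \eqref{3.5} and the Itô estimate for $G$ leading to \eqref{3.6} hold for $y_k(\cdot,u_k)$, uniformly in $k$. None of the constants there depended on the truncation level. We therefore get bounds on $y_k(\cdot,u_k)$ in $L^2(\Omega_T;V)\cap L^\infty(0,T;L^2(\Omega;H))$, on $\sigma(y_k(\cdot,u_k))$ in $L^2(\Omega_T;L_2^0)$, and on $f_k(y_k(\cdot,u_k))$ in $L^2(\Omega_T;H)$. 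Along a subsequence these three families converge weakly to limits $z$, $\Phi$ and $\psi$ respectively.

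The second step is tightness. As in Lemma \ref{lem:3.2}, we use the factorization formula and the compactness of $G_\alpha$ from $L^2(0,T;H)$ to $C([0,T];H)$. The drift $A_1y_k+a_0y_k+f_k(y_k)+u_k$ is bounded in $L^2(\Omega_T;V')$ and the noise coefficient is bounded in $L^2(\Omega_T;L_2^0)$, so the laws $\mathcal L(y_k(\cdot,u_k))$ are tight in $C([0,T];H)$. By the Chebyshev argument, the laws of $u_k$ are tight in $L^2(0,T;H)$ with the weak topology. This space admits a countable separating family of continuous functions, namely pairings with a countable dense set, so Theorem \ref{Th:3.1} applies to the joint vector $(y_k(\cdot,u_k),u_k,W,y_0)$. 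It yields a new probability space carrying $\widetilde y_k,\widetilde u_k,\widetilde W_k,\widetilde y_0$ with the same joint laws, such that $\widetilde y_k\to\widetilde z$ in $C([0,T];H)$ a.s. and $\widetilde u_k\rightharpoonup\widetilde u$ weakly in $L^2(0,T;H)$ a.s. Equality of laws transfers the equation \eqref{1.7}, so $\widetilde y_k$ solves the cut-off equation with control $\widetilde u_k$ and noise $\widetilde W_k$.

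The main obstacle is identifying the nonlinear limits, because $k$ now varies together with the solution. For fixed $\widetilde\omega$, the pathwise convergence in $C([0,T];H)$ gives $\widetilde y_k\to\widetilde z$ a.e. in $D_T$ along a subsequence. Since $P_k(s)\to s$ locally uniformly and $f$ is continuous, this gives $f_k(\widetilde y_k)=f(P_k(\widetilde y_k))\to f(\widetilde z)$ a.e. in $\widetilde\Omega\times D_T$. We would first try to combine this with the uniform $L^2$ bound \eqref{3.6}, via Lemma 1.3 of \cite{28} (a.e. convergence plus an $L^2$ bound gives weak $L^2$ convergence), to conclude $f_k(\widetilde y_k)\rightharpoonup f(\widetilde z)$ in $L^2(\widetilde\Omega_T;H)$. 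For $\sigma$, the Lipschitz property and the bound \eqref{3.30}-type moment estimate give uniform integrability. Vitali's theorem then yields $\sigma(\widetilde y_k)\to\sigma(\widetilde z)$ strongly in $L^2(\widetilde\Omega_T;L_2^0)$.

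We then pass to the limit in the weak formulation tested against $\varphi(t)v$ with $\varphi\in L^\infty(\widetilde\Omega_T)$ and $v\in V$, exactly as in the displayed limit identities of Lemma \ref{lem:3.2}. The stochastic integrals converge because $\widetilde W_k\to\widetilde W$ and the integrands converge strongly. Hence $\widetilde z$ is a weak martingale solution of \eqref{1.1} with control $\widetilde u$, i.e.\ $\widetilde z=\widetilde y(\cdot,\widetilde u)$, using uniqueness from Theorem \ref{Th:2.1}. The a.s. convergence together with the uniform second moment bounds gives $\widetilde y_k(\cdot,\widetilde u_k)\rightharpoonup\widetilde y(\cdot,\widetilde u)$ weakly, indeed strongly, in $L^2(\widetilde\Omega_T;H)$, which proves the statement.
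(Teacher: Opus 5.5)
Your route is the paper's. The paper proves Lemma~\ref{lem:3.4} only by saying that the argument of Lemma~\ref{lem:3.2} carries over, and your single diagonal Skorokhod step (uniform energy and $G$-estimates, tightness, Jakubowski's theorem, identification of the limit, uniqueness) is the natural way to do that. Your identification $f_k(\widetilde y_k)=f(P_k(\widetilde y_k))\to f(\widetilde z)$ a.e.\ plus Lions' lemma is more careful than the paper's claim that $f_k(\widetilde y_k)\equiv f(\widetilde y_k)$ for large $k$, which an $H$-bound cannot give.

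There is, however, a false step in your treatment of the noise term. The moment bound \eqref{3.30} was obtained for the truncated controls $\widetilde u_M$, using $|\widetilde u_M|\le M$ in (*4). For controls that are only bounded in $L^2(\Omega_T;H)$ there is no uniform bound on moments of order $p>2$, hence no uniform integrability. The asserted strong convergence $\sigma(\widetilde y_k)\to\sigma(\widetilde z)$ in $L^2(\widetilde\Omega_T;L_2^0)$ therefore fails in general, and so does your closing ``indeed strongly''. A counterexample:
\begin{itemize}
\item Take $f\equiv0$, $\sigma(s)=s$, $y_0=0$, and $u_k=\sqrt{k}\,\mathbf{1}_{A_k}g$ with $A_k\in\mathcal F_0$, $P(A_k)=1/k$, $0\neq g\in H$. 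Then $u_k\rightharpoonup0$.
\item By linearity, $y_k(\cdot,u_k)=\sqrt{k}\,\mathbf{1}_{A_k}Y$, where $Y\not\equiv0$ is the solution with control $g$ and is independent of $\mathcal F_0$.
\item Thus $\E\int_0^T\|y_k\|^2\,dt=\E\int_0^T\|Y\|^2\,dt>0$ for every $k$, while $y_k\to0$ in probability.
\item Since laws are preserved, neither $\widetilde y_k$ nor $\sigma(\widetilde y_k)$ converges strongly in $L^2$ on the Skorokhod space.
\end{itemize}

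The step can be repaired by working in probability instead of in $L^2(\widetilde\Omega)$. This is the right tool anyway, since your Wiener processes $\widetilde W_k$ change with $k$.
\begin{itemize}
\item Since $\sigma$ is Lipschitz and $\widetilde y_k\to\widetilde z$ in $C([0,T];H)$ a.s., you get $\sigma(\widetilde y_k)\to\sigma(\widetilde z)$ in $L^2(0,T;L_2^0)$ in probability.
\item Together with $\widetilde W_k\to\widetilde W$, the standard lemma on stochastic integrals with converging integrands and converging Wiener processes (e.g.\ Lemma~2.1 of Debussche, Glatt-Holtz and Temam) gives $\int_0^\cdot\sigma(\widetilde y_k)\,d\widetilde W_k\to\int_0^\cdot\sigma(\widetilde z)\,d\widetilde W$ in probability in $L^2(0,T;H)$.
\item By \eqref{2.2} and \eqref{3.5}, the tested terms $\int_0^T\bigl(\int_0^t\sigma(\widetilde y_k)\,d\widetilde W_k,\varphi(t)v\bigr)\,dt$ are bounded in $L^2(\widetilde\Omega)$, hence uniformly integrable. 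Their expectations therefore converge, which is all the passage to the limit in the weak formulation requires.
\end{itemize}
For the conclusion of the lemma itself, a.s.\ convergence together with the uniform second-moment bound gives exactly weak convergence in $L^2(\widetilde\Omega_T;H)$. It gives strong convergence only in $L^q$ for $q<2$, so you should drop the strong-convergence claim.
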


\begin{proof}
The proof of this lemma follows the same argument as the proof of
Lemma \ref{lem:3.2}.
\end{proof}
\begin{lemma}\label{lem:3.5}
For any solution of \eqref{1.1}, we have the following estimate:
\begin{equation}
\mathbb{E}\|y(t)\|^2
\leq
C_{20}
\left(
\mathbb{E}\|y_0\|_{L^{\infty}}^2 
+
\int_0^t
\mathbb{E}\|u(s)\|^2\,ds
+ \lambda \sup_{i}\|e_i\|_{L^\infty}^2 L_{\sigma}^2 t \right) e^{mt}.
\label{3.35}
\end{equation}
for any $t>0$, where $C_{20}>0$ is independent of $t$, and $m$
is defined in \eqref{astast}.
\end{lemma}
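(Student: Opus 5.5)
The plan is to take expectations in the energy equality \eqref{2.6} of Theorem~\ref{Th:2.1} and close the estimate with Gronwall's inequality. The exponent $m$ will be the constant $\mu$ from \eqref{astast}, possibly enlarged by an arbitrarily small amount that comes from absorbing the control term.

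The first step is to remove the stochastic integral. Under the expectation, the term $2\int_0^t(\sigma(y(s))\,dW(s),y(s))$ vanishes. It is a true martingale because $\E\int_0^T\|\sigma(y)\|^2\|y\|^2\,ds<\infty$. If that integrability is not available a priori, we localize with the stopping times $\tau_R$ used in the proof of Lemma~\ref{lem:3.3}, work with $t\wedge\tau_R$, and let $R\to\infty$ at the end by Fatou's lemma.

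The drift terms are then bounded pointwise in $s$:
\begin{enumerate}
\item By \eqref{1.4} and the Poincar\'e inequality, $\langle A_1y,y\rangle\le-\Lambda_A\|y\|_V^2\le-\frac{\Lambda_A}{C_P}\|y\|^2$.
\item $(a_0y,y)\le\|a_0\|_{L^\infty(D)}\|y\|^2$.
\item By the mean value theorem, $f(0)=0$ and \eqref{2.3}, we have $f(s)s\le\lambda_f s^2$, hence $(f(y),y)\le\lambda_f\|y\|^2$ (this is (*3)).
\item For the noise, as in (*2) and \eqref{2.1}, $\|\sigma(y)\|_{L_2^0}^2\le\lambda\sup_i\|e_i\|_{L^\infty}^2\int_D\sigma^2(y)\,dx$. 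With $\sigma^2(s)\le(1+\delta)L_\sigma^2s^2+C_\delta\sigma(0)^2$, this yields the contribution $\lambda\sup_i\|e_i\|^2_{L^\infty}L_\sigma^2\|y\|^2$ plus a constant. This constant is the source of the additive term $\lambda\sup_i\|e_i\|^2_{L^\infty}L_\sigma^2\,t$ in \eqref{3.35}; the factor $|D|\sigma(0)^2$ is absorbed into $C_{20}$.
\item The control term is handled by $2(u,y)\le\varepsilon\|y\|^2+\varepsilon^{-1}\|u\|^2$.
\end{enumerate}

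Collecting these bounds, $\phi(t):=\E\|y(t)\|^2$ satisfies
\[
\phi(t)\le\E\|y_0\|^2+\varepsilon^{-1}\int_0^t\E\|u\|^2\,ds+c\,t+\int_0^t(2\mu+\varepsilon)\phi(s)\,ds .
\]
The factor $2$ in front of the dissipative terms in \eqref{2.6} is either kept inside $m$ or normalized by the convention behind \eqref{astast}. Gronwall's inequality in its integral form then gives \eqref{3.35}, using $\|y_0\|\le|D|^{1/2}\|y_0\|_{L^\infty}$. Since the forcing $a(t)=\E\|y_0\|^2+\varepsilon^{-1}\int_0^t\E\|u\|^2+ct$ is nondecreasing, the conclusion takes the clean form $\phi(t)\le a(t)e^{mt}$.

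The step I expect to be the main obstacle is obtaining exactly the exponent $m$ of \eqref{astast}, with a constant $C_{20}$ independent of $t$. This matters because Theorem~\ref{Th:2.5} later needs $\gamma>m$. The issue is that the control term introduces an extra $\varepsilon$ in the exponent. I would first try to avoid it by applying It\^o's formula to $e^{-mt}\|y(t)\|^2$ and bounding $2(u,y)$ against the strictly negative part. If that fails, the alternative is to note that $\gamma>\mu$ is strict, so choosing $\varepsilon<\gamma-\mu$ is harmless, and to define $m:=\mu+\varepsilon$ with $C_{20}$ depending only on $\varepsilon$. A secondary point is justifying the vanishing of the martingale term; the localization described above handles it.
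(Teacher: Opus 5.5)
Your route is the paper's route: take expectations in the energy equality \eqref{2.6}, bound the drift with (*1)--(*4) and Poincar\'e, apply Young to $2(u,y)$, then use Gronwall. Your handling of $\sigma(0)$ and of the stochastic integral is more careful than the paper's.

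The genuine problem is the exponent, which you flag yourself, and neither of your fallbacks resolves it. Put $\alpha:=-\Lambda_A/C_P+\lambda_f+\|a_0\|_{L^\infty(D)}$ and $\beta:=\lambda\sup_i\|e_i\|_{L^\infty}^2L_\sigma^2$, so that $\mu=\alpha+\beta$. In \eqref{2.6} the drift carries the factor $2$ and the It\^o correction does not, so your inequality yields the rate $k=2\alpha+(1+\delta)\beta+\varepsilon$. The $2$ on $\alpha$ is not a convention. Take $\sigma\equiv0$, $f(s)=\lambda_f s$, $a_0=0$, $A_1=\Delta$, $u=0$ and $y_0$ the first Dirichlet eigenfunction. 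Then $\E\|y(t)\|^2=e^{2\alpha t}\|y_0\|^2$, which is not $O(t\,e^{(\mu+\varepsilon)t})$ once $\lambda_f$ is large enough that $\alpha>\beta+\varepsilon$. So $m:=\mu+\varepsilon$ is false in general. Your first idea does work when $\alpha<0$, and there it even removes the $\varepsilon$. Use $2(u,y)\le\frac{|\alpha|}{2}\|y\|^2+\frac{2}{|\alpha|}\|u\|^2$ and choose $\delta\beta\le\frac{|\alpha|}{2}$; then $k=\mu-\frac{|\alpha|}{2}+\delta\beta\le\mu$.

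Second, the integral-form Gronwall and your ``clean form'' $\phi(t)\le a(t)e^{mt}$ need $m\ge0$, and for a negative rate the bound is false. Take $\sigma\equiv\sigma_0\neq0$, $f=0$, $u=0$, $y_0=0$ and $\mu<0$. Then $\E\|y(t)\|^2$ increases to a positive stationary value, while the right-hand side of \eqref{3.35} tends to $0$. The correct step is It\^o's formula for $e^{-kt}\|y(t)\|^2$ stopped at $\tau_R$, followed by Fatou; this also disposes of the martingale term. It gives $\phi(t)\le e^{kt}\phi(0)+\int_0^t e^{k(t-s)}\bigl(\varepsilon^{-1}\E\|u(s)\|^2+c\bigr)\,ds$, and hence \eqref{3.35} with $m=\max\{k,0\}$.

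The paper's proof is loose in exactly the same way: it obtains the rate $2(\mu+\varepsilon)$, calls it $m$, and applies Gronwall regardless of sign. So the lemma is correct only with $m$ read as $\max\{k,0\}$, or as $\max\{\mu,0\}$ when $\alpha<0$. You should state it that way rather than try to match \eqref{astast}.
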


\begin{proof}
From the energy equality and estimates \textnormal{(*1)--(*4)}, we
obtain
\[
\begin{aligned}
\mathbb{E}\|y(t)\|^2
\leq{}&
\mathbb{E}\|y_0\|^2
+
2\mathbb{E}\int_0^t
\Bigl(
-\Lambda_A\|y(s)\|_V^2
+\Lambda_f\|y(s)\|^2
\\
&\qquad\qquad
+\lambda\sup_i\|e_i\|_{L^\infty}^2
L_\sigma^2\bigl(1+\|y(s)\|^2\bigr)
+\|a_0\|_{L^\infty}\|y(s)\|^2
\\
&\qquad\qquad
+\frac{1}{\ve}\|u(s)\|^2
+\varepsilon\|y(s)\|^2
\Bigr)\,ds
\end{aligned}
\]
for every $\varepsilon>0$.  Poincaré's inequality
gives
\[
-\Lambda_A\|y\|_V^2
\leq
-\frac{\Lambda_A}{C_P}\|y\|^2.
\]
Thus, the estimate \eqref{3.35} follows by choosing a sufficiently small
$\varepsilon>0$ and applying Gronwall's inequality.
\end{proof}

\section{Proofs of the Main Results}\label{Sec:4}

In this section, we prove Theorems \ref{Th:2.2}--\ref{Th:2.5}.

\begin{proof}[Proof of Theorem \ref{Th:2.2}] We start with noting that the set of admissible controls is non-empty, as $u \equiv 0 \in U_{\mathrm{ad}}$. Since $J(u)\geq 0$, there exists
$J^*\geq 0$ such that
\begin{equation}
J^*
=
\inf_{u\in U_{\mathrm{ad}}}J(u).
\label{4.1}
\end{equation}

Hence, there exists a minimizing sequence of admissible controls
$\{u_n\}$ such that
\begin{equation}
J(u_n)\longrightarrow J^*,
\qquad n\to\infty.
\label{4.2}
\end{equation}
Thus, by \eqref{4.1}, for all sufficiently large $n$, we have
\[
\begin{aligned}
J^*
\leq J(u_n)
=
\mathbb{E}\int_0^T
\|y(t,u_n)\|^2\,dt
+
\mathbb{E}\int_0^T
\|u_n(t)\|^2\,dt
\leq J^*+1.
\end{aligned}
\]
This implies that $\{u_n\}$ is weakly compact in $L^2(\Omega_T;H)$.
Passing to a subsequence, if necessary, we obtain
\begin{equation}
u_n\rightharpoonup u
\quad\text{weakly in }L^2(\Omega_T;H),
\qquad n\to\infty.
\label{4.3}
\end{equation}
Thus, by Lemma \ref{lem:3.2}, there exist a probability space
$(\widetilde{\Omega},\widetilde{\mathcal{F}},\widetilde{P})$ and corresponding 
random variables, such that
\[
\widetilde{y}(t,\widetilde{u}_n)
\rightharpoonup
\widetilde{y}(t,\widetilde{u})
\quad\text{weakly in }
L^2(\widetilde{\Omega}_T;H),
\qquad n\to\infty.
\]
Therefore, using the lower semicontinuity of the cost functional $J$,
we obtain
\[
\begin{aligned}
J^*
&=
\lim_{n\to\infty}J(\widetilde{u}_n)
\geq
\liminf_{n\to\infty}
\widetilde{\mathbb{E}}
\int_0^T
\left\|
\widetilde{y}(t,\widetilde{u}_n)
\right\|^2\,dt
+
\liminf_{n\to\infty}
\widetilde{\mathbb{E}}
\int_0^T
\left\|
\widetilde{u}_n(t)
\right\|^2\,dt
\\
&\geq
\widetilde{\mathbb{E}}
\int_0^T
\left\|
\widetilde{y}(t,\widetilde{u})
\right\|^2\,dt
+
\widetilde{\mathbb{E}}
\int_0^T
\left\|
\widetilde{u}(t)
\right\|^2\,dt.
\end{aligned}
\]

Consequently, the pair
$\bigl(\widetilde{u},\widetilde{y}(t,\widetilde{u})\bigr)$ is a weak
martingale solution of the optimal control problem
\eqref{1.1}--\eqref{1.2}. Here we used the fact that the
distributions of $u_n$ and $\widetilde{u}_n$, as well as those of
$y(t,u_n)$ and $\widetilde{y}(t,\widetilde{u}_n)$, coincide.
The existence of an optimal pair for problem \eqref{1.7} is proved
analogously.
\end{proof}

\begin{proof}[Proof of Theorem \ref{Th:2.3}]
By Theorem \ref{Th:2.1}, the problems \eqref{1.1}--\eqref{1.2} and \eqref{1.7}
have weak martingale solutions
\[
\bigl(
\widetilde{u}^*,
\widetilde{y}^*(t,\widetilde{u}^*)
\bigr)
\quad\text{and}\quad
\bigl(
\widetilde{u}_k^*,
\widetilde{y}_k^*(t,\widetilde{u}_k^*)
\bigr),
\]
respectively. Once again, to simplify the notation, let us drop the superscripts $\sim$.  For every $k$, we have
\[
J_k^*
=
J_k({u}_k^*)
\leq
J_k(0).
\]
Hence,
\begin{equation}
{\mathbb{E}}
\int_0^T
\left\|
{y}_k^*(t,{u}_k^*)
\right\|^2\,dt
+
{\mathbb{E}}
\int_0^T
\left\|
{u}_k^*(t)
\right\|^2\,dt
\leq
{\mathbb{E}}
\int_0^T
\left\|
{y}_k(t,0)
\right\|^2\,dt.
\label{4.4}
\end{equation}
By Lemma \ref{lem:3.3}, we have
\begin{equation}
{y}_k(\cdot,0)
\longrightarrow
{y}(\cdot,0)
\quad\text{in }
L^2({\Omega}_T;H),
\qquad k\to\infty.
\label{4.5}
\end{equation}
Therefore, it follows from \eqref{4.4} that
$\{u_k^*\}$ is weakly compact in
$L^2(\Omega_T;H)$. Hence, choosing a weakly convergent
subsequence, we have
\[
u_k^*
\rightharpoonup
u_0
\quad\text{weakly in }
L^2(\Omega_T;H),
\qquad k\to\infty.
\]
Thus, by Lemma \ref{lem:3.4}, we have
\begin{equation}
y_k^*(\cdot,u_k^*)
\rightharpoonup
y(\cdot,u_0)
\quad\text{weakly in }
L^2(\Omega_T;H),
\qquad k\to\infty.
\label{4.6}
\end{equation}
Using  weak lower semicontinuity, we obtain
\begin{equation}
\liminf_{k\to\infty}
\mathbb{E}
\int_0^T
\|u_k^*(t)\|^2\,dt
\geq
\mathbb{E}
\int_0^T
\|u_0(t)\|^2\,dt.
\label{4.7}
\end{equation}
On the other hand,
\[
J_k(u_k^*)
\leq
J_k(u_0).
\]
Therefore,
\[
\begin{aligned}
&\liminf_{k\to\infty}
\left[
\mathbb{E}
\int_0^T
\left\|
y_k^*(t,u_k^*)
\right\|^2\,dt
+
\mathbb{E}
\int_0^T
\|u_k^*(t)\|^2\,dt
\right]
\\
&\qquad\leq
\liminf_{k\to\infty}
\left[
\mathbb{E}
\int_0^T
\left\|
y_k(t,u_0)
\right\|^2\,dt
+
\mathbb{E}
\int_0^T
\|u_0(t)\|^2\,dt
\right].
\end{aligned}
\]
Using weak lower semicontinuity once again, in view of \eqref{4.5} and \eqref{4.6}, we obtain
\[
\begin{aligned}
&\mathbb{E}
\int_0^T
\left\|
y(t,u_0)
\right\|^2\,dt
+
\liminf_{k\to\infty}
\mathbb{E}
\int_0^T
\|u_k^*(t)\|^2\,dt
\\
&\qquad\leq
\mathbb{E}
\int_0^T
\left\|
y(t,u_0)
\right\|^2\,dt
+
\mathbb{E}
\int_0^T
\|u_0(t)\|^2\,dt.
\end{aligned}
\]
Consequently,
\begin{equation}
\liminf_{k\to\infty}
\mathbb{E}
\int_0^T
\|u_k^*(t)\|^2\,dt
\leq
\mathbb{E}
\int_0^T
\|u_0(t)\|^2\,dt.
\label{4.8}
\end{equation}
In view of \eqref{4.7} and \eqref{4.8}, we deduce
\[
\liminf_{k\to\infty}
\mathbb{E}
\int_0^T
\|u_k^*(t)\|^2\,dt
=
\mathbb{E}
\int_0^T
\|u_0(t)\|^2\,dt.
\]
Passing to a further subsequence if necessary, we conclude that
\begin{equation}
\mathbb{E}
\int_0^T
\|u_k^*(t)\|^2\,dt
\to
\mathbb{E}
\int_0^T
\|u_0(t)\|^2\,dt,
\qquad k\to\infty.
\label{4.9}
\end{equation}
In conjunction with \eqref{4.5}, this leads to
\begin{equation}
u_k^*
\longrightarrow
u_0
\quad\text{strongly in }L^2(\Omega_T;H),
\qquad k\to\infty.
\label{4.10}
\end{equation}
The latter establishes the strong compactness of the family
$\{u_k^*\}$. Moreover,
\[
J_k(u_k^*)
\leq
J_k(u^*).
\]
Hence, by Lemma \ref{lem:3.3}, we have
\begin{align}
\liminf_{k\to\infty}J_k(u_k^*)
&\leq
\liminf_{k\to\infty}
\left[
\mathbb{E}\int_0^T
\|y_k(t,u^*)\|^2\,dt
+
\mathbb{E}\int_0^T
\|u^*(t)\|^2\,dt
\right]
\nonumber\\
&=
\mathbb{E}\int_0^T
\|y(t,u^*)\|^2\,dt
+
\mathbb{E}\int_0^T
\|u^*(t)\|^2\,dt
=
J^*.
\label{4.11}
\end{align}
On the other hand, we have
\begin{align}
\liminf_{k\to\infty}J_k(u_k^*)
&=
\liminf_{k\to\infty}
\left[
\mathbb{E}\int_0^T
\|y_k^*(t,u_k^*)\|^2\,dt
+
\mathbb{E}\int_0^T
\|u_k^*(t)\|^2\,dt
\right]
\nonumber\\
&\geq
\mathbb{E}\int_0^T
\|y(t,u_0)\|^2\,dt
+
\mathbb{E}\int_0^T
\|u_0(t)\|^2\,dt
=
J(u_0).
\label{4.12}
\end{align}
Combining \eqref{4.11} and \eqref{4.12} we deduce that $u_0$ is
an optimal control. Therefore, the second assertion of
Theorem \ref{Th:2.3} is proved.

To prove the third assertion, it remains to show that
\begin{equation}
y_k^*(\cdot,u_k^*)
\to
y(\cdot,u_0)
\quad\text{strongly in }L^2(\Omega_T;H),
\qquad k\to\infty.
\label{4.13}
\end{equation}
Indeed, we have
\begin{align}
\left\|
y_k^*(t,u_k^*)-y(t,u_0)
\right\|
\leq{}&
\left\|
y_k(t,u_k^*)-y_k(t,u_0)
\right\|
\nonumber\\
&+
\left\|
y_k(t,u_0)-y(t,u_0)
\right\|.
\label{4.14}
\end{align}

The second term on the right-hand side of \eqref{4.14} converges to
zero in $L^2(\Omega_T;H)$ by Lemma \ref{lem:3.3}. Next, similarly to \eqref{3.32}, we have  
\[
\mathbb{E}
\left\|
y_k(t,u_k^*)-y_k(t,u_0)
\right\|^2
\leq
C_{19}
\int_0^T
\mathbb{E}
\left\|
u_k^*(t)-u_0(t)
\right\|^2\,dt
\to 0,
\qquad k\to\infty.
\]
where the last convergence follows from \eqref{4.10}. 
Thus \eqref{4.13} follows, which proves the third assertion of Theorem \ref{Th:2.3}.
Finally, to prove the first assertion of the theorem, we observe that, by
\eqref{4.10} and \eqref{4.13},
\[
\begin{aligned}
\lim_{k\to\infty}J_k^*
&=
\lim_{k\to\infty}
\mathbb{E}\int_0^T
\left\|
y_k^*(t,u_k^*)
\right\|^2\,dt
+
\lim_{k\to\infty}
\mathbb{E}\int_0^T
\left\|
u_k^*(t)
\right\|^2\,dt
\\
&=
\mathbb{E}\int_0^T
\left\|
y(t,u_0)
\right\|^2\,dt
+
\mathbb{E}\int_0^T
\left\|
u_0(t)
\right\|^2\,dt =
J(u_0)
=
J^*,
\end{aligned}
\]
which concludes the proof of Theorem \ref{Th:2.3}.
\end{proof}
\begin{proof}[Proof of Theorem \ref{Th:2.4}]
Recall that $U_{\mathrm{ad}}$ denotes the set of all admissible controls
for the problem \eqref{1.1}--\eqref{1.3}. Let
\[
J^*
=
\inf_{u\in U_{\mathrm{ad}}}J(u).
\] 
In view of \eqref{2.10} and
\eqref{3.35},
\[
J(0)
=
\mathbb{E}\int_0^\infty
e^{-\gamma t}
\|y(t,0)\|^2\,dt
<
\infty,
\]
which yields that $0 \in U_{\mathrm{ad}}$. Next, let $\{u_n\}$ be a minimizing sequence for $J$, such that
\begin{equation}
J(u_n)
\to
J^*,
\qquad n\to\infty.
\label{4.18}
\end{equation}
Thus, for sufficiently large $n$,
\begin{equation}
\begin{aligned}
J(u_n)
={}&
\mathbb{E}\int_0^\infty
e^{-\gamma t}
\|y(t,u_n)\|^2\,dt
+
\mathbb{E}\int_0^\infty
\|u_n(t)\|^2\,dt 
\leq
J^*+1.
\end{aligned}
\label{4.19}
\end{equation}
This implies that $\{u_n\}$ is weakly compact in
$L^2(\Omega_T;H)$, and
$\{y(\cdot,u_n)\}$ is weakly compact in
$L_\gamma^2(\Omega_\infty;H)$. Hence, passing to a subsequence if
necessary, we obtain
\begin{equation}
u_n
\rightharpoonup
u
\quad\text{weakly in }
L^2(\Omega_T;H),
\end{equation}
and
\begin{equation}
y(\cdot,u_n)
\rightharpoonup
z(\cdot)
\quad\text{weakly in }
L_\delta^2(\Omega_\infty;H),
\qquad n\to\infty.
\label{4.20}
\end{equation}
Now we show that there exists a probability space
$(\widetilde{\Omega},\widetilde{\mathcal{F}},\widetilde{P})$ such that
\[
\mathcal{L}\bigl(y(\cdot,u_n),u_n\bigr)
=
\mathcal{L}\bigl(\widetilde{y}(\cdot,\widetilde{u}_n),
\widetilde{u}_n\bigr),
\]
\[
\mathcal{L}(z)=\mathcal{L}(\widetilde{z}),
\qquad
\mathcal{L}(u)=\mathcal{L}(\widetilde{u}),
\]
and
\[
\widetilde{z}(t)
=
\widetilde{y}(t,\widetilde{u}).
\]
Indeed, by Lemma \ref{lem:3.2}, for every $N>0$, on $[0,N]$ we have
\begin{equation}
\widetilde{y}(\cdot,\widetilde{u}_n)
\rightharpoonup
\widetilde{y}(\cdot,\widetilde{u})
\quad\text{weakly in }
L^2(\widetilde{\Omega}_N;H),
\qquad n\to\infty.
\end{equation}
Note that the space $(\widetilde{\Omega}, \widetilde{\mathcal{F}}, \widetilde{P})$ may be chosen to be $([0,1], \mathcal{B}, \lambda)$, where $\mathcal{B}$ are Borel subsets of $[0,1]$, and $\lambda$ is Lebesgue measure. Thus, for any
$\varphi\in L_\gamma^2(\Omega_\infty;H)$, we have
\begin{align}
&\left|
\widetilde{\mathbb{E}}
\int_0^\infty
\int_D
\bigl(
\widetilde{y}(t,\widetilde{u}_n)
-
\widetilde{y}(t,\widetilde{u})
\bigr)
\varphi(t,x,\omega)\,dx\,
e^{-\gamma t}\,dt
\right|
\nonumber\\
&\qquad\leq
\left|
\widetilde{\mathbb{E}}
\int_0^N
\int_D
\bigl(
\widetilde{y}(t,\widetilde{u}_n)
-
\widetilde{y}(t,\widetilde{u})
\bigr)
\varphi(t,x,\omega)\,dx\,
e^{-\gamma t}\,dt
\right|
\nonumber\\
&\qquad\quad+
\widetilde{\mathbb{E}}
\int_N^\infty
\int_D
\left|
\widetilde{y}(t,\widetilde{u}_n)
-
\widetilde{y}(t,\widetilde{u})
\right|
\left|
\varphi(t,x,\omega)
\right|\,dx\,
e^{-\gamma t}\,dt
=: I_1+I_2.
\label{4.21}
\end{align}
Since $\varphi\in L_\delta^2(\widetilde{\Omega}_N;H)$,
for every fixed $N$,
\begin{equation}
I_1 \to 0,
\qquad n\to\infty.
\label{4.22}
\end{equation}
As far as the second term in \eqref{4.21}, for some constant $C>0$, we
obtain
\[
I_2
\leq
C
\left(
\int_N^\infty
\left[
\widetilde{\mathbb{E}}
\left\|
\widetilde{y}(t,\widetilde{u}_n)
\right\|^2
+
\widetilde{\mathbb{E}}
\left\|
\widetilde{y}(t,\widetilde{u})
\right\|^2
\right]
e^{-\gamma t}\,dt
\right)^{1/2}
\left(
\int_N^\infty
\widetilde{\mathbb{E}}
\|\varphi(t)\|^2
e^{-\gamma t}\,dt
\right)^{1/2}.
\]
Since $\widetilde{u}_n$ is bounded in
$L^2(\widetilde{\Omega}_\infty;H)$, in conjunction with \eqref{3.35}, we deduce that the first factor in the
right-hand side is uniformly bounded with respect to both $N$ and $n$,
whereas the second factor tends to zero as $N\to\infty$. Thus, for
every $\varepsilon>0$, we may choose $N>0$ sufficiently large so that
\[
I_2<\frac{\varepsilon}{2}
\]
uniformly with respect to $n$. Then, for this fixed $N$, choose
$n$ sufficiently large so that
\[
I_1<\frac{\varepsilon}{2}.
\]
Thus, \eqref{4.20} follows. Finally, using \eqref{4.18} and weak lower semicontinuity of the cost
functional, we obtain
\[
\begin{aligned}
J^*
&=
\lim_{n\to\infty}J(u_n) \geq
\liminf_{n\to\infty}
\widetilde{\mathbb{E}}
\int_0^\infty
\left\|
\widetilde{y}(t,\widetilde{u}_n)
\right\|^2
e^{-\gamma t}\,dt
+
\liminf_{n\to\infty}
\widetilde{\mathbb{E}}
\int_0^\infty
\left\|
\widetilde{u}_n(t)
\right\|^2\,dt
\\
&\geq
\widetilde{\mathbb{E}}
\int_0^\infty
\left\|
\widetilde{y}(t,\widetilde{u})
\right\|^2
e^{-\gamma t}\,dt
+
\widetilde{\mathbb{E}}
\int_0^\infty
\left\|
\widetilde{u}(t)
\right\|^2\,dt.
\end{aligned}
\]
This concludes the proof of Theorem \ref{Th:2.4}.
\end{proof}

\begin{proof}[Proof of Theorem \ref{Th:2.5}] Once again, to simplify the notation, we drop the superscripts $\sim$. Clearly,  
\begin{align}
J^*
&\leq
J(u_{T,\infty}^*) =
\mathbb{E}
\int_0^T
\left\|
y(t,u_{T,\infty}^*)
\right\|^2
e^{-\gamma t}\,dt
+
\mathbb{E}
\int_0^T
\left\|
u_{T,\infty}^*(t)
\right\|^2\,dt
\nonumber\\
&\quad+
\mathbb{E}
\int_T^\infty
\left\|
y(t,u_{T,\infty}^*)
\right\|^2
e^{-\gamma t}\,dt =
J_T^*+\varphi(T).
\label{4.23}
\end{align}
By \eqref{3.35},  we have $\varphi(T) \to 0$ as $T \to \infty$. On the other hand, we have
\begin{align}
J^*
=
J(u^*)
={}&
\mathbb{E}
\int_0^T
\left\|
y(t,u^*)
\right\|^2
e^{-\gamma t}\,dt
+
\mathbb{E}
\int_0^T
\left\|
u^*(t)
\right\|^2\,dt
\nonumber\\
&+
\mathbb{E}
\int_T^\infty
\left\|
y(t,u^*)
\right\|^2
e^{-\gamma t}\,dt
+
\mathbb{E}
\int_T^\infty
\left\|
u^*(t)
\right\|^2\,dt
\geq
J_T^*+\psi(T).
\label{4.24}
\end{align}
Again, $\psi(T)\to 0$ as $T\to\infty$. Therefore, from
\eqref{4.23} and \eqref{4.24}, we obtain
\[
J^*-J_T^*
\leq
\varphi(T)
\to 0,
\qquad T\to\infty,
\]
and
\[
J^*-J_T^*
\geq
\psi(T)
\to 0,
\qquad T\to\infty.
\]
Hence, the first assertion of Theorem \ref{Th:2.5} holds. Let us prove the second assertion. Set $T=n$. We have
\[
J_n^*
=
J_n(u_n^*)
\longrightarrow
J^*,
\qquad n\to\infty.
\]
Hence, there exists a constant $C>0$ such that
\[
\begin{aligned}
J_n(u_n^*)
&=
\mathbb{E}
\int_0^n
\|y^*(t,u_n^*)\|^2
e^{-\gamma t}\,dt
+
\mathbb{E}
\int_0^n
\|u_n^*(t)\|^2\,dt
\\
&=
\mathbb{E}
\int_0^n
\|y^*(t,u_n^*)\|^2
e^{-\gamma t}\,dt
+
\mathbb{E}
\int_0^\infty
\|u_{n,\infty}^*(t)\|^2\,dt
\leq C.
\end{aligned}
\]
Thus, $\{u_{n,\infty}^*\}$ is weakly compact in
$L^2(\Omega_\infty;H)$. Therefore, passing to a subsequence if
necessary,
\[
u_{n,\infty}^*
\rightharpoonup
u_\infty
\quad\text{weakly in }
L^2(\Omega_\infty;H),
\qquad n\to\infty.
\]
Furthermore, 
\[
\begin{aligned}
J(u_{n,\infty}^*)
&=
\mathbb{E}
\int_0^\infty
\|y(t,u_{n,\infty}^*)\|^2
e^{-\gamma t}\,dt
+
\mathbb{E}
\int_0^\infty
\|u_{n,\infty}^*(t)\|^2\,dt
\\
&=
J_n^*
+
\mathbb{E}
\int_n^\infty
\|y(t,u_{n,\infty}^*)\|^2
e^{-\gamma t}\,dt
\to
J^*,
\qquad n\to\infty.
\end{aligned}
\]
This proves the second assertion of the theorem. Proceeding with the proofs of the third and the fourth assertions, let $y(t,u_\infty)$ be a weak martingale solution corresponding to
$u_\infty$. We now show that the pair
\[
\bigl(u_\infty,y(t,u_\infty)\bigr)
\]
is optimal. Indeed, by  weak lower semicontinuity,
\begin{align}
J^*
&=
\lim_{n\to\infty}
J(u_{n,\infty}^*) \geq
\liminf_{n\to\infty}
\mathbb{E}
\int_0^\infty
\|y(t,u_{n,\infty}^*)\|^2
e^{-\gamma t}\,dt
+
\liminf_{n\to\infty}
\mathbb{E}
\int_0^\infty
\|u_{n,\infty}^*(t)\|^2\,dt
\nonumber\\
&\geq
\mathbb{E}
\int_0^\infty
\|y(t,u_\infty)\|^2
e^{-\gamma t}\,dt
+
\mathbb{E}
\int_0^\infty
\|u_\infty(t)\|^2\,dt.
\label{4.25}
\end{align}
The proof of Theorem \ref{Th:2.5} is now complete.
\end{proof}

\begin{corollary}
Under the assumptions of Theorem \ref{Th:2.5}, we have, along a subsequence,
\[
u_{n,\infty}^*
\to
u_\infty
\quad\text{ strongly in }
L^2(\Omega_\infty;H).
\]
\end{corollary}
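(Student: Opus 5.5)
The argument mirrors the norm-convergence trick used for \eqref{4.10} in the proof of Theorem \ref{Th:2.3}. Weak convergence $u_{n,\infty}^*\rightharpoonup u_\infty$ in the Hilbert space $L^2(\Omega_\infty;H)$ is already available from the proof of Theorem \ref{Th:2.5}. It therefore suffices to show that, along a further subsequence,
\[
\mathbb{E}\int_0^\infty\|u_{n,\infty}^*(t)\|^2\,dt\to\mathbb{E}\int_0^\infty\|u_\infty(t)\|^2\,dt .
\]
Weak convergence together with convergence of norms then gives strong convergence.

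The lower bound $\liminf_n \mathbb{E}\int_0^\infty\|u_{n,\infty}^*\|^2dt\ge \mathbb{E}\int_0^\infty\|u_\infty\|^2dt$ is weak lower semicontinuity of the norm. For the upper bound, I use the two ingredients established in the proof of Theorem \ref{Th:2.5}. First, $J(u_{n,\infty}^*)\to J^*$. Second, by \eqref{4.25}, $J^*\ge J(u_\infty)$, so $u_\infty$ is optimal and $J(u_\infty)=J^*$. I also need the state term to be lower semicontinuous along the sequence:
\[
\liminf_n \mathbb{E}\int_0^\infty\|y(t,u_{n,\infty}^*)\|^2e^{-\gamma t}dt\ge \mathbb{E}\int_0^\infty\|y(t,u_\infty)\|^2e^{-\gamma t}dt .
\]
This follows from the weak convergence $y(\cdot,u_{n,\infty}^*)\rightharpoonup y(\cdot,u_\infty)$ in $L^2_\gamma(\Omega_\infty;H)$, that is, assertion 4 of Theorem \ref{Th:2.5}. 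That assertion was obtained exactly as in the proof of Theorem \ref{Th:2.4}: Lemma \ref{lem:3.2} on each $[0,N]$, combined with the uniform tail bound from \eqref{3.35} and \eqref{2.10}. Writing $Y_n$ and $Y$ for the state terms and $U_n$ and $U$ for the control terms, I then get
\[
\limsup_n U_n=\limsup_n\bigl(J(u_{n,\infty}^*)-Y_n\bigr)=J^*-\liminf_n Y_n\le J^*-Y=U .
\]
Combined with the lower bound, this gives $U_n\to U$ along the subsequence.

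The main obstacle is the change of probability space hidden in assertion 4. The weak convergence of states holds only for the Skorokhod–Jakubowski copies $\widetilde u_{n,\infty}^*,\widetilde y$ produced by Lemma \ref{lem:3.2}. The plan is to carry out the whole argument on that new space $\widetilde\Omega$, as the paper does when it drops the tildes. This is legitimate because the laws of $u_{n,\infty}^*$ and $y(\cdot,u_{n,\infty}^*)$ are preserved, so the quantities $J$, $U_n$ and $Y_n$ are unchanged. Strong convergence is then asserted for these versions. The one point to verify is that the tail argument controlling $\int_N^\infty$ applies uniformly in $n$. This holds because the controls are bounded in $L^2(\Omega_\infty;H)$ and \eqref{3.35} gives growth $e^{\mu t}$ with $\gamma>\mu$.
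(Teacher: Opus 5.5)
Your proposal is correct and is essentially the paper's argument: weak convergence in $L^2(\Omega_\infty;H)$ plus convergence of norms, with the norm convergence obtained from $J(u_{n,\infty}^*)\to J^*$, weak lower semicontinuity of the state term, and the optimality of $u_\infty$ coming from \eqref{4.25}. The only difference is minor: the paper deduces $\liminf_n \mathbb{E}\int_0^\infty\|u_{n,\infty}^*(t)\|^2dt=\mathbb{E}\int_0^\infty\|u_\infty(t)\|^2dt$ and then passes to a further subsequence, whereas your $\limsup$ bound gives convergence of the norms along the whole weakly convergent subsequence.
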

\begin{proof}
Indeed, from \eqref{4.25}, we have
\[
\begin{aligned}
J^*
&=
\mathbb{E}
\int_0^\infty
\left\|
y(t,u_\infty)
\right\|^2
e^{-\gamma t}\,dt
+
\liminf_{n\to\infty}
\mathbb{E}
\int_0^\infty
\left\|
u_{n,\infty}^*(t)
\right\|^2\,dt
\\
&=
\mathbb{E}
\int_0^\infty
\left\|
y(t,u_\infty)
\right\|^2
e^{-\gamma t}\,dt
+
\mathbb{E}
\int_0^\infty
\left\|
u_\infty(t)
\right\|^2\,dt.
\end{aligned}
\]

Therefore, passing to a subsequence if necessary,
\[
\mathbb{E}
\int_0^\infty
\left\|
u_{n,\infty}^*(t)
\right\|^2\,dt
\to
\mathbb{E}
\int_0^\infty
\left\|
u_\infty(t)
\right\|^2\,dt.
\]

Hence,
\[
u_{n,\infty}^*
\to
u_\infty
\quad\text{strongly in }
L^2(\Omega_\infty;H).
\]
\end{proof}

\section*{Acknowledgments}  The research of Oleksandr Misiats was supported by Simons Collaboration
Grant for Mathematicians No. 854856 and National Science Foundation Grant DMS-2408507.

\end{document}